\DeclareMathOperator{\rmax}{rmax}
\newtheorem{theorem}{Theorem}
\newtheorem{lemma}[theorem]{Lemma}
\newtheorem{corollary}[theorem]{Corollary}
\newtheorem{proposition}[theorem]{Proposition}
\newtheorem{remark}[theorem]{Remark}
\theoremstyle{definition}
{

}
\long\def\symbolfootnote[#1]#2{\begingroup
\def\thefootnote{\fnsymbol{footnote}}\footnote[#1]{#2}\endgroup}
\title{Harmonic numbers, Catalan's triangle and mesh patterns}
\author{
Sergey Kitaev \\
\small Department of Computer and Information Sciences\\[-0.8ex]
\small University of Strathclyde\\[-0.8ex]
\small Glasgow G1 1XH, United Kingdom\\[-0.8ex]
\small \texttt{sergey.kitaev@cis.strath.ac.uk}
\and
Jeffrey Liese \\
\small Department of Mathematics\\[-0.8ex]
\small Cal Poly\\[-0.8ex]
\small San Luis Obispo, CA 93407, USA\\[-0.8ex]
\small \texttt{jliese@calpoly.edu}
}
\date{\small Submitted: Date 1;  Accepted: Date 2;
 Published: Date 3.\\
\small MR Subject Classifications: 05A15}
\begin{document}
\maketitle

\begin{abstract}
\noindent \

The notion of a mesh pattern was introduced recently, but it has already proved to be a useful tool for description purposes related to sets of permutations.  In this paper we study eight mesh patterns of small lengths. In particular, we link avoidance of one of the patterns to the harmonic numbers, while for three other patterns we show their distributions on 132-avoiding permutations are given by the Catalan triangle. Also, we show that two specific mesh patterns are Wilf-equivalent.  As a byproduct of our studies, we define a new set of sequences counted by the Catalan numbers and provide a relation on the Catalan triangle that seems to be new.\\

\noindent {\bf Keywords:} mesh patterns, distribution, harmonic numbers, Catalan's triangle, bijection

\end{abstract}

\section{Introduction}

The notion of mesh patterns in permutations was introduced by Br\"and\'en and Claesson \cite{BrCl} to provide explicit expansions for certain permutation statistics as possibly infinite linear combinations of (classical) permutation patterns (see \cite{kit} for a comprehensive introduction to the theory of permutation patterns).  There is a line of papers \cite{AKV,HilJonSigVid,kitrem,kitrem2,kitrem3,kitremtie1,kitremtie2,Ulf,Ulf1} related to studying various mesh patterns in sets of permutations or sometimes in restricted sets of permutations, and this paper is a contribution to the study.  In particular, we provide links to the harmonic numbers and the Catalan triangle.  Besides being interesting in their own right, there are other motivations to analyze mesh patterns.  For example, a certain generalization of the notion of mesh patterns was used in \cite{Ulf} by \'Ulfarsson to simplify a description of Gorenstein Schubert varieties and to give a new description of  Schubert varieties that are defined by inclusions.

We will now provide some definitions that will be used throughout the paper.  We define an \textit{$n$-permutation} to be a word without repeated elements over the set $\{1,2,\ldots,n\}$.  An element $\pi_i$ of a permutation $\pi_1\pi_2\cdots\pi_n$ is a {\em right-to-left maximum} if $\pi_i>\pi_j$ for $j\in\{i+1,i+2,\ldots,n\}$. For example, the set of right-to-left maxima of the permutation 264513 is $\{3,5,6\}$.

A mesh pattern is a generalization of several classes of patterns studied intensively in the literature during the last decade (see \cite{kit}). However for this paper we do not need the full definition of classical pattern avoidance.  In fact, apart from the notion of a mesh pattern, we only need the notion of a permutations {\em avoiding the (classical) pattern $132$}, or a {\em $132$-avoiding permutation}. A permutation $\pi= \pi_1\pi_2\cdots\pi_n$ {\em avoids} the pattern 132, if there are no numbers $1\leq i<j<k\leq n$ such that $\pi_i<\pi_k<\pi_j$. For example, the permutation $43512$ avoids the pattern 132 while $24531$ contains two occurrences of this pattern, namely the subsequences 243 and 253. For two patterns (of any type) $p$ and $q$ we say that $p$ and $q$ are {\em Wilf-equivalent} if for all $n\geq 0$, the number of $n$-permutations avoiding $p$ is equal to that avoiding $q$.

The notion of a mesh pattern can be best described using permutation diagrams, which are  similar to permutation matrices (for a more detailed description, we refer to \cite{BrCl,Ulf}). For example, the diagrams in Figure \ref{three-mesh-patterns}, after ignoring the shaded areas and paying attention to the height of the dots (points) while going through them from left to right, each correspond to the permutation 213, while the diagram on the left in Figure \ref{example-permutation} corresponds to the permutation 82536174. A mesh pattern consists of the diagram corresponding to a permutation where some subset of the squares determined by the grid are shaded.  In fact, three mesh patterns are depicted in Figure \ref{three-mesh-patterns} and eight mesh patterns in Figure \ref{allPatterns}.

\begin{figure}[ht]
\begin{center}
\includegraphics[scale=0.4]{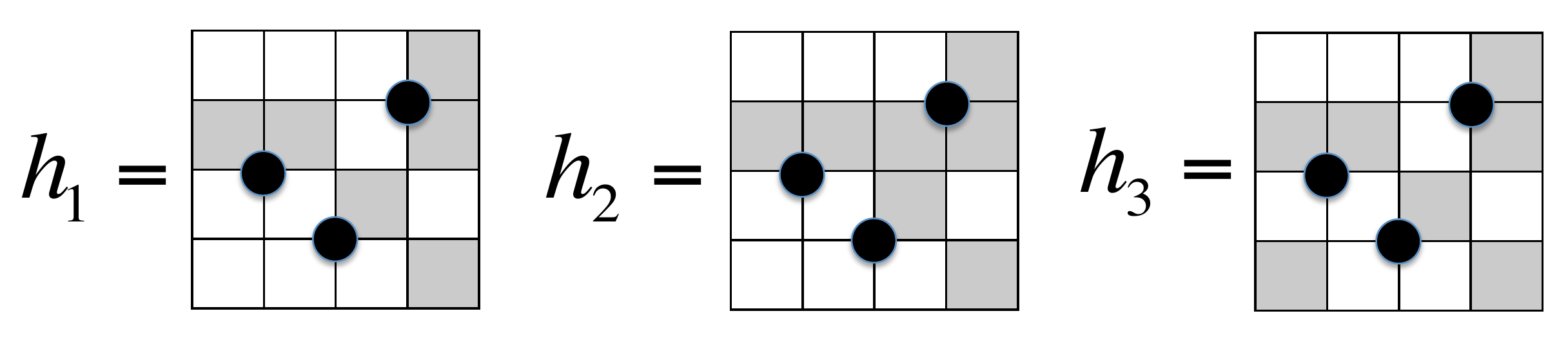}
\end{center}
\vspace{-15pt}
\caption{Three mesh patterns.}\label{three-mesh-patterns}
\end{figure}

We say that a mesh pattern $p$ of length $k$ occurs in a permutation $\pi$ if the permutation diagram of $\pi$ contains $k$ dots whose order is the same as that of the permutation diagram of $p$, i.e. $\pi$ contains a subsequence that is order-isomorphic to $p$, and additionally, no element of $\pi$ can be present in a shaded area determined by $p$ and the corresponding elements of $\pi$ in this subsequence.  For example, the three circled elements in the permutation $82536174$ in Figure \ref{example-permutation} are an occurrence of the mesh pattern $h_1$ defined in Figure \ref{three-mesh-patterns}, as demonstrated by the diagram on the right in Figure \ref{example-permutation}  (note that none of the permutation elements fall into the shaded area determined by the mesh pattern).  However, these circled elements are not an occurrence of the pattern $h_2$ in Figure \ref{three-mesh-patterns} because of the element 6 in the permutation; they also are not an occurrence of the pattern $h_3$ in Figure \ref{three-mesh-patterns} because of the element 2 in the permutation. One can verify using the diagram in Figure \ref{three-mesh-patterns} that the subsequence 536 in the permutation $82536174$ is an occurrence of the mesh patterns $h_1$ and $h_2$, but not $h_3$.

\begin{figure}[ht]
\begin{center}
\includegraphics[scale=0.4]{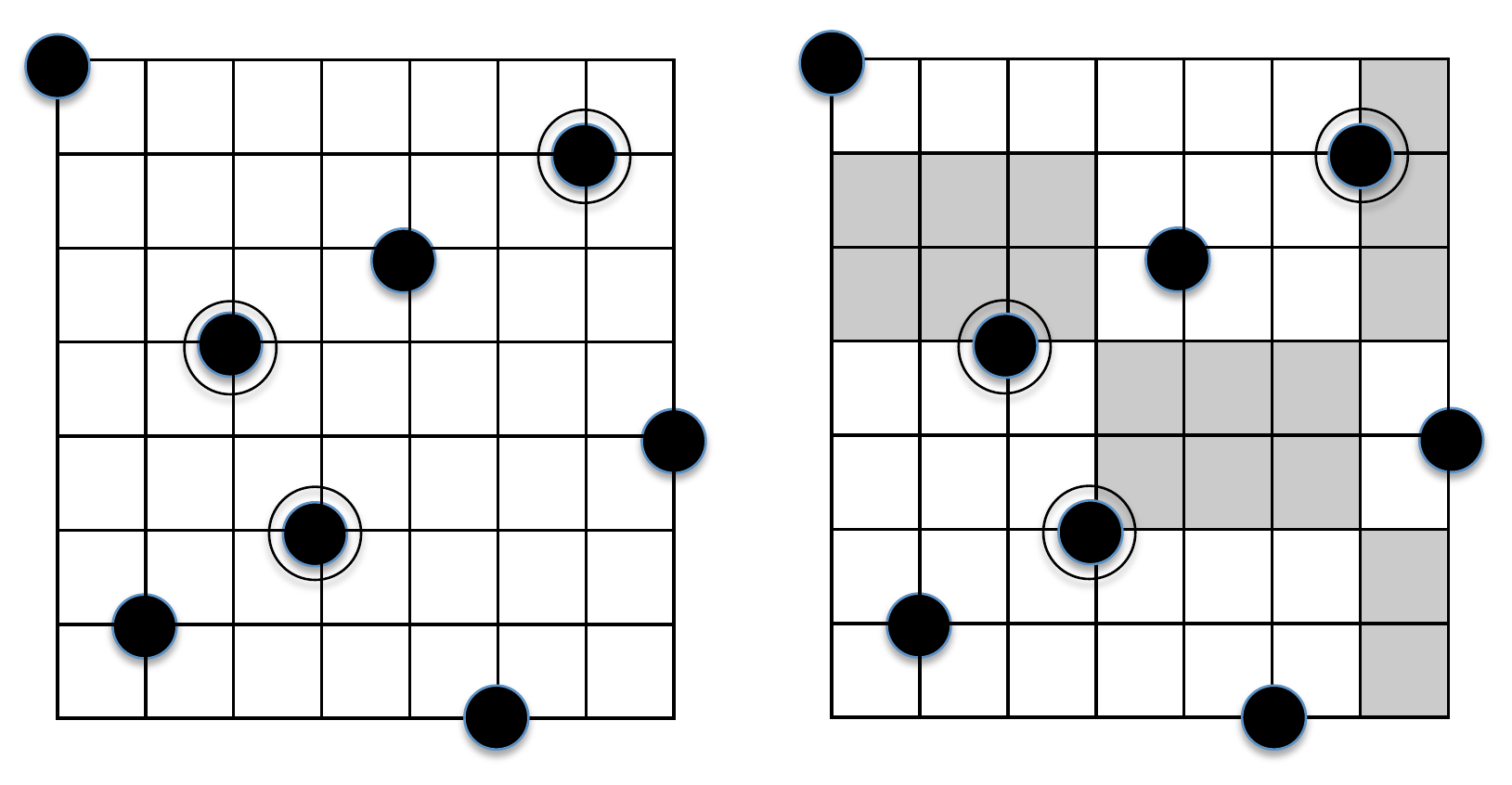}
\end{center}
\vspace{-15pt}
\caption{An example of an occurrence of a mesh pattern.}\label{example-permutation}
\end{figure}

The definitions of all mesh patterns of interest in this paper are given in Figure \ref{allPatterns}. In particular, $p$ is an instance of what we call {\em border mesh patterns}, which are defined by shading all non-interior squares.

\begin{figure}[ht]
\begin{center}
\includegraphics[scale=0.4]{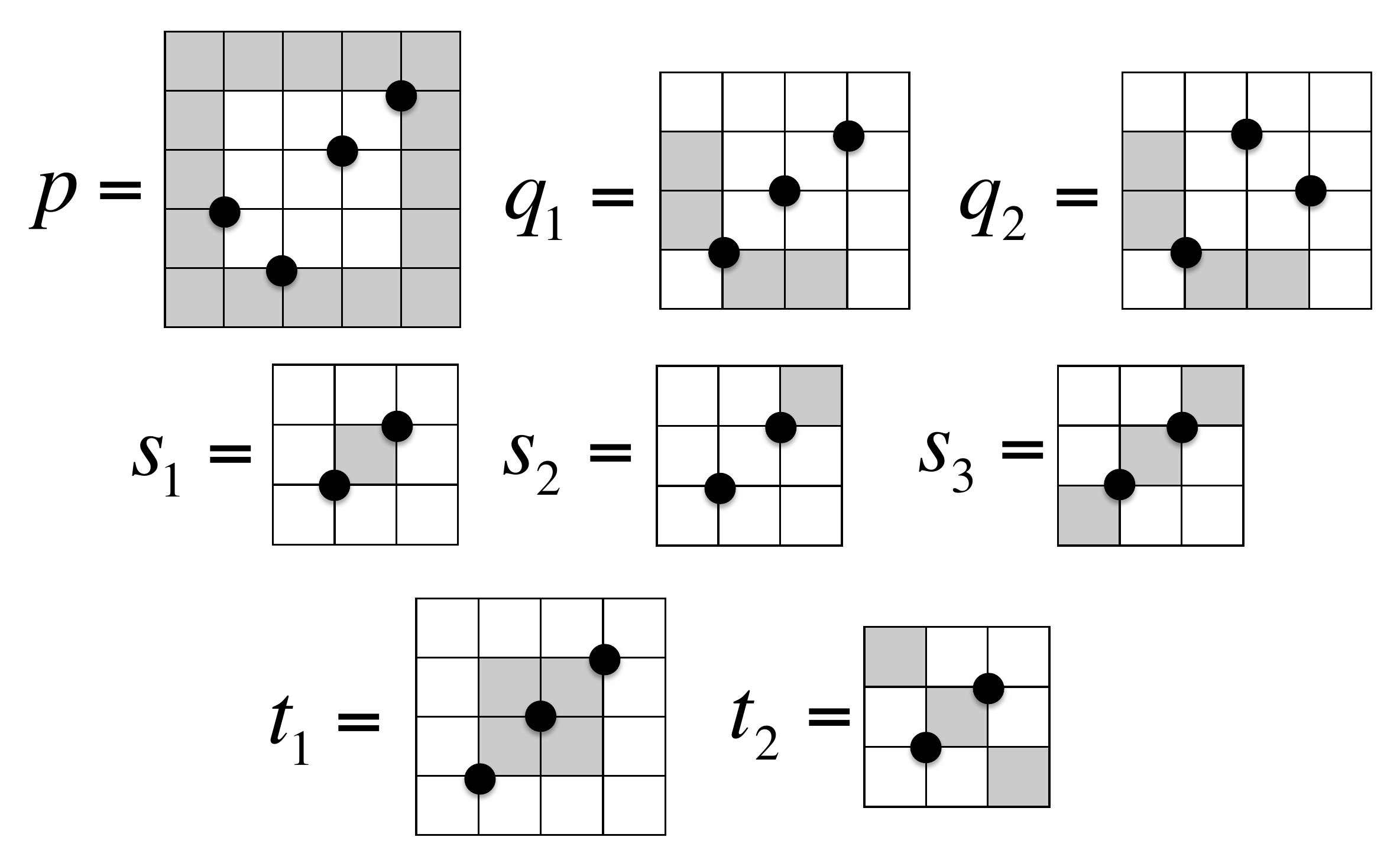}
\end{center}
\vspace{-15pt}
\caption{Definitions for all patterns of interest in this paper.}\label{allPatterns}
\end{figure}

We also need to define a Catalan number and the Catalan triangle. The \textit{$n$-th Catalan number} $C_n$ is defined by the recursion $C_{n+1}=\sum_{i=0}^{n}C_iC_{n-i}$ with $C_0=1$.  \textit{Catalan's triangle} is defined by $C(0,0)=1$, $C(0,k)=0$ for $k>0$ and $C(n,k)=C(n-1,k)+C(n,k-1)$.  An alternative recursion for the Catalan triangle, namely $C(n,k)=\sum_{j=0}^{k}C(n-1,j)$, will also be useful.  The beginning of Catalan's triangle is shown below.
$$
\begin{array}{ccccccccccc}
& & & & & 1 & & & & & \\
& & & &  1 &  & 1 & & & & \\
& & & 1 & & 2 & & 2 & & & \\
& & 1 & & 3 & & 5 & & 5 & & \\
& 1 & & 4 & & 9 & & 14 & & 14 & \\
1 & & 5 & & 14 & & 28 & & 42 & & 42
\end{array}
$$
The Catalan numbers can always be read from Catalan's triangle by looking at the rightmost number in each row.  The following two formulas for the Catalan numbers and the entries in Catalan's triangle are both well-known:
$$C_n=\frac{1}{n+1}{2n \choose n} \ \ \ \ \ \ C(n,k)=\frac{(n+k)!(n-k+1)}{k!(n+1)!}.$$
If the number of $n$-permutations with $k$ occurrences of a pattern $\tau$ is given by $C(n,k)$ or a shift of indices of this number, e.g. $C(n-1,k)$, we say that $\tau$ has {\em Catalan's distribution}.  \\

The paper is organized as follows. In Section \ref{harmonic} we not only link the distribution of the pattern $p$ to the harmonic numbers (see Theorems \ref{lem:pnkSatisfiesRec} and \ref{avoid2134}), but also study an exponential generating function for this distribution (see Theorem \ref{thmGenerFunc1}). In Section \ref{sec-q1-q2} we show Wilf-equivalence of the patterns $q_1$ and $q_2$. Section \ref{sec-132-av} is devoted to study of the patterns $s_1$, $s_2$, $s_3$, $t_1$ and $t_2$ on $132$-avoiding permutations. More specifically, Subsections \ref{sub-s1} and \ref{sub-s2} show that both $s_1$ and $s_2$ have Catalan's distribution on $132$-avoiding permutations (see Theorems \ref{thm:Boxed12IsCatalanTriangle} and \ref{thm:12TopRightIsCatalanTriangle}), while Subsection \ref{sub-s3} shows that $s_3$ has the reverse Catalan distribution on this class of permutations (see Theorem \ref{thm:ReverseDiagonal12IsReverseCatalanTriangle}). We prove Theorem \ref{thm:ReverseDiagonal12IsReverseCatalanTriangle} combinatorially by introducing an involution on the set of 132-avoiding permutations and using Theorem \ref{thm:Boxed12IsCatalanTriangle}. This involution allows us to establish a joint equidistribution fact for four statistics on 132-avoiding permutations. We also provide an extra proof of Theorem \ref{thm:Boxed12IsCatalanTriangle} to establish a bijective proof of the fact that $s_1$ and $s_2$ are equidistributed. As a byproduct to our research, we define a new set of sequences counted by the Catalan numbers (see Proposition \ref{byproductCatalan1}). Additionally, we discover a relation for Catalan's triangle that involves the Catalan numbers which seems to be new (see Theorem \ref{thm:CatalanTriangleRelation}).  This relation led us to a combinatorial proof of a binomial identity stated in Corollary \ref{binom}. In Subsection \ref{sub-t1} we discuss the minimum and maximum number of occurrences of the pattern $t_1$ on 132-avoiding permutations (see beginning of the subsection and Theorem \ref{maxt1}), while in Subsection \ref{sub-t2} we find the number of 132-avoiding permutations with exactly zero, one, two or three occurrences of the pattern $t_2$ (see Theorems \ref{q132proposition1}, \ref{q132proposition2}, \ref{q132proposition3}, \ref{q132proposition4}); essentially all our results here are given in terms of the Catalan numbers. Finally, in Section \ref{concluding} we provide some concluding remarks.

\section{The pattern $p$ and the harmonic numbers}\label{harmonic}

We let $H_n=\sum_{k=1}^{n}\frac{1}{k}$ denote the $n$-th harmonic number. In this section, we will express the distribution of the border mesh pattern $p$ in terms of $H_n$.

\begin{proposition} For $n\geq 4$ and $k\geq1$, we have that \begin{equation}\label{for:pnkFormula}p_{n,k}:=(n-2)!\sum_{i=k+1}^{n-2}\frac{1}{i}=(n-2)!(H_{n-2}-H_k)\end{equation} satisfies the recursion \begin{equation}\label{eqn:pnkRecursion}p_{n,k}=(n-2)p_{n-1,k}+(n-3)!.\end{equation}\end{proposition}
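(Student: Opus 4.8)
The plan is to verify the proposition by direct substitution, since the statement is really an algebraic identity about harmonic sums rather than anything combinatorial. First I would dispose of the claimed equality $(n-2)!\sum_{i=k+1}^{n-2}\frac1i=(n-2)!(H_{n-2}-H_k)$, which is immediate: by the definition $H_m=\sum_{j=1}^m\frac1j$ we have $H_{n-2}-H_k=\sum_{i=k+1}^{n-2}\frac1i$, so the two expressions for $p_{n,k}$ coincide term by term. This reduces the problem to checking the recursion \eref{pnkRecursion} for the sum form of $p_{n,k}$.

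Next I would expand the right-hand side of \eref{pnkRecursion}. Using the defining formula with $n$ replaced by $n-1$, we have $p_{n-1,k}=(n-3)!\sum_{i=k+1}^{n-3}\frac1i$, so that $(n-2)p_{n-1,k}=(n-2)(n-3)!\sum_{i=k+1}^{n-3}\frac1i=(n-2)!\sum_{i=k+1}^{n-3}\frac1i$, using $(n-2)(n-3)!=(n-2)!$. Adding the extra term $(n-3)!$ and writing it as $(n-2)!\cdot\frac1{n-2}$ lets me fold it into the sum as the missing $i=n-2$ summand, giving
$$
(n-2)p_{n-1,k}+(n-3)!=(n-2)!\sum_{i=k+1}^{n-3}\frac1i+(n-2)!\cdot\frac{1}{n-2}=(n-2)!\sum_{i=k+1}^{n-2}\frac1i=p_{n,k},
$$
which is exactly the left-hand side. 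This completes the verification.

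I would close by noting the role of the hypotheses: the condition $n\ge4$ guarantees $n-3\ge1$, so the index ranges $k+1,\dots,n-3$ and $k+1,\dots,n-2$ are genuine (possibly empty, but well-defined) sums, and $k\ge1$ keeps $p_{n,k}$ within the intended regime. There is no real obstacle here; the only subtlety worth flagging is the bookkeeping of the summation limits, in particular recognizing that the single added factorial $(n-3)!$ is precisely $(n-2)!$ times the $\frac1{n-2}$ term by which the two harmonic partial sums differ. Everything else is routine simplification.
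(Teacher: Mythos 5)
Your proof is correct and follows essentially the same route as the paper's: substitute the sum formula for $p_{n-1,k}$ into the right-hand side of the recursion, use $(n-2)(n-3)!=(n-2)!$, and absorb the extra $(n-3)!$ as the missing $i=n-2$ term of the harmonic sum. The only difference is cosmetic --- you additionally spell out the trivial equality with $H_{n-2}-H_k$ and the role of the hypothesis $n\geq 4$, which the paper leaves implicit.
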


\begin{proof} Plugging in the formula into the RHS of (\ref{eqn:pnkRecursion}) we obtain,
\begin{eqnarray*}
(n-2)p_{n-1,k}+(n-3)!&=&(n-2)\left((n-3)!\sum_{i=k+1}^{n-3}\frac{1}{i}\right)+(n-3)!\\
&=&(n-2)!\sum_{i=k+1}^{n-3}\frac{1}{i}+(n-3)!\\
&=&(n-2)!\sum_{i=k+1}^{n-2}\frac{1}{i}\\
&=&p_{n,k}.
\end{eqnarray*}
We are done.\end{proof}

\begin{theorem} \label{lem:pnkSatisfiesRec} The number of $n$-permutations with $k$ occurrences of the border mesh pattern $p$ for $k\geq1$ is given by $p_{n,k}$.  \end{theorem}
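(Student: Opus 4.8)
The plan is to set up a recurrence for the number of $n$-permutations having exactly $k$ occurrences of the border mesh pattern $p$, and then verify that this recurrence matches the one established in the preceding Proposition for $p_{n,k}$, together with a matching base case. To do this, I first need to understand exactly what an occurrence of the border mesh pattern $p$ looks like. Since $p$ is a border mesh pattern of length $2$ (shading all non-interior squares of a length-$2$ underlying pattern), an occurrence of $p$ should pin down a pair of positions whose values form the underlying classical pattern while forcing the surrounding border squares to be empty. The first step, then, is to translate "occurrence of $p$" into a concrete combinatorial condition on a permutation $\pi=\pi_1\cdots\pi_n$, most plausibly a condition tied to consecutive values and the extremal (first/last, largest/smallest) elements of $\pi$, since the border shading constrains what can appear to the left, right, above and below the two chosen points.

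Once the occurrences of $p$ are characterized, the key step is to find how occurrences of $p$ in an $n$-permutation relate to occurrences in an $(n-1)$-permutation obtained by deleting a distinguished element (for instance the largest value $n$, or the value in a fixed position). The target recursion \eref{eqn:pnkRecursion}, namely $p_{n,k}=(n-2)p_{n-1,k}+(n-3)!$, strongly suggests the combinatorial mechanism: deleting the appropriate element from an $n$-permutation with $k$ occurrences should almost always yield an $(n-1)$-permutation still carrying $k$ occurrences, with $(n-2)$ ways to reinsert (accounting for the factor $n-2$), while the additive term $(n-3)!$ should count a distinguished family of permutations that pick up (or lose) a single occurrence under the deletion/insertion, most likely those where the deleted element sits in a position that creates a new border-constrained pair. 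I would carefully classify insertion positions by whether they preserve the occurrence count, increase it, or land inside a previously shaded region, and show that exactly the "correct" count $n-2$ preserves $k$ while a set of size $(n-3)!$ contributes the correction term.

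To nail down the base case, I would directly enumerate $n$-permutations with $k$ occurrences of $p$ for the smallest relevant $n$ (such as $n=4$), checking against $p_{n,k}=(n-2)!(H_{n-2}-H_k)$; for $n=4$ this gives small explicit values that can be confirmed by hand, anchoring the induction. With the base case and the recurrence in hand, the theorem follows because $p_{n,k}$ satisfies the same recursion (by the Proposition) and the same initial data, so by induction on $n$ the count of $n$-permutations with exactly $k$ occurrences of $p$ equals $p_{n,k}$.

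The main obstacle I anticipate is the first step: correctly reading off from the diagram in Figure \ref{allPatterns} precisely which border squares are shaded and hence exactly which pairs of elements constitute an occurrence of $p$, and then converting this into a clean structural statement (likely involving right-to-left maxima or consecutive values, given that those notions are emphasized in the introduction). Getting this characterization wrong would derail the entire counting argument, so I would spend care verifying it on small examples before extracting the recurrence. A secondary difficulty will be isolating exactly the family of size $(n-3)!$ responsible for the additive term and proving that no double-counting or off-by-one error creeps into the insertion/deletion bijection; this bookkeeping, rather than any deep idea, is where the proof is most likely to go astray.
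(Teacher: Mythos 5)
There is a genuine gap, and it is exactly at the point you flagged as the main obstacle: your characterization of the pattern $p$ is wrong, and everything downstream depends on it. The pattern $p$ in Figure \ref{allPatterns} is not a length-$2$ border mesh pattern; it is a border mesh pattern of length $4$, with underlying classical pattern $2134$ (this is also signalled by the name of Theorem \ref{avoid2134}). Your length-$2$ reading is not just a detail you could patch later: a border mesh pattern of length $2$ shades every square except the single interior one, so an occurrence would force the two chosen elements to be $1$ and $n$ sitting in the first and last positions with all other elements strictly inside the box --- hence any permutation would contain at most \emph{one} occurrence, contradicting the fact that $p_{n,k}>0$ for all $k$ up to $n-3$ (e.g.\ $p_{5,1}=3!\left(\tfrac{1}{2}+\tfrac{1}{3}\right)=5$, whereas the length-$2$ reading would give $(n-2)!=6$ permutations with one occurrence and none with more). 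So the ``clean structural statement'' you hoped to extract simply does not exist for pairs of elements, and the counting argument cannot start.

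The facts you would actually need, and which drive the paper's proof, are these: in any occurrence of $p$ the values $1$ and $n$ must play the roles of $1$ and $4$, the value $n$ must be the \emph{last} element of the permutation, and the \emph{first} element of the permutation must play the role of $2$; consequently the number of occurrences of $p$ equals the number of elements lying (positionally) between $1$ and $n$. With this in hand, the recursion (\ref{eqn:pnkRecursion}) comes from a case split on whether the permutation begins with the value $2$: if it does, then $1$ is pinned to position $n-k-1$ and the remaining elements are free, giving the $(n-3)!$ term; if it does not, then deleting the value $2$ (which may sit in any of the $n-2$ positions other than the first and last) and relabeling preserves the $k$ occurrences, giving the $(n-2)p_{n-1,k}$ term. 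Note also that your proposed alternative --- deleting the largest value $n$ --- cannot work even in outline: every occurrence of $p$ involves $n$, so removing it destroys all occurrences rather than preserving their count. Your overall strategy (match the recursion and base case from the Proposition) is the same as the paper's, but without the correct length-$4$ characterization and the choice of the value $2$ as the element to delete, the proposal does not yield a proof.
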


\begin{proof}
We prove this theorem by showing that the number of $n$-permutations with $k$ occurrences of the border mesh pattern $p$ also satisfies {\rm(\ref{eqn:pnkRecursion})} and because of the matching initial conditions, the result follows.

To show that the number of such permutations satisfies {\rm(\ref{eqn:pnkRecursion})}, it helps first to make a few observations about the pattern $p$.  Firstly, given any occurrence of $p$ in an $n$-permutation, $1$ and $n$ must play the roles of $1$ and $4$ respectively in the pattern.  Additionally, $n$ must be the last element of the permutation and the first element of the permutation must play the role of 2 in the pattern.  It is then easy to deduce that the number of elements between $1$ and $n$ in such a permutation is precisely the number of occurrences of $p$.

We will now count permutations having exactly $k$ occurrences of $p$.  Either the permutation begins with a 2 or it does not.  Suppose the permutation does begin with a 2.  As mentioned before it must also end with $n$ and, by the above observations, in order for it to have exactly $k$ occurrences of the pattern, $1$ must be fixed in position $n-k-1$.  We are then free to permute the remaining elements in $(n-3)!$ ways.  Thus, the number of permutations beginning with a 2 and having $k$ occurrences of the pattern is $(n-3)!$.

Now suppose the permutation begins with a number other than 2.  Notice that removing the element $2$ from the permutation and relabeling yields an $(n-1)$-permutation still having $k$ occurrences of the pattern.  Also, $2$ could have been in any position other than the first and the last so in total there are $(n-2)$ possible positions for 2.  Thus, the number of permutations beginning with an element other than two and having $k$ occurrences of the pattern is $(n-2)p_{n-1,k}$ and the theorem is proved.
\end{proof}

It turns out that $p_{n,0}$, the number of $n$-permutations avoiding $p$, does not satisfy (\ref{eqn:pnkRecursion}). However, from (\ref{for:pnkFormula}), it is clear that we get the following relation $p_{n,k}=p_{n,k-1}-\frac{(n-2)!}{k}$, which does mean that we can always get $p_{n,k}$ in terms of $p_{n,1}$.  Using this fact, one can get a formula for $p_{n,0}$, the number of $n$-permutations which avoid $p$.

\begin{theorem}\label{avoid2134}  We have $$p_{n,0}= n!-(n-3)(n-2)!+p_{n,1}=(n-2)!(H_{n-2}+n^2-2n+2).$$ \end{theorem}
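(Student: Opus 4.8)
The plan is to prove the two asserted equalities separately. The left equality $p_{n,0}=n!-(n-3)(n-2)!+p_{n,1}$ carries the combinatorial content, while the right equality is a routine simplification. For the left equality, I would partition all $n!$ permutations according to their number of occurrences of $p$. By Theorem~\ref{lem:pnkSatisfiesRec}, for each $k\geq 1$ exactly $p_{n,k}$ permutations have $k$ occurrences; moreover the structural analysis in that proof (the number of occurrences equals the number of elements between $1$ and $n$, with position $1$ reserved for the role of $2$ and position $n$ for $n$) shows that the number of occurrences never exceeds $n-3$, which also matches the fact that the sum in (\ref{for:pnkFormula}) is empty once $k>n-3$. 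Hence $n!=p_{n,0}+\sum_{k=1}^{n-3}p_{n,k}$, so it suffices to prove that $\sum_{k=1}^{n-3}p_{n,k}=(n-3)(n-2)!-p_{n,1}$.

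The key step is to evaluate this sum by interchanging the order of summation. Writing $p_{n,k}=(n-2)!\sum_{i=k+1}^{n-2}\frac1i$ and letting the pair $(k,i)$ range over $1\le k\le n-3$, $k+1\le i\le n-2$, I would reorganize by fixing $i$: for each $i$ with $2\le i\le n-2$ the index $k$ runs over $1\le k\le i-1$ (since $i\le n-2$ forces $i-1\le n-3$), contributing the factor $i-1$. This yields
\[
\sum_{k=1}^{n-3}p_{n,k}=(n-2)!\sum_{i=2}^{n-2}\frac{i-1}{i}=(n-2)!\sum_{i=2}^{n-2}\Bigl(1-\frac1i\Bigr)=(n-2)!\bigl((n-3)-(H_{n-2}-1)\bigr).
\]
Recognizing $(n-2)!(H_{n-2}-1)=(n-2)!(H_{n-2}-H_1)=p_{n,1}$ from (\ref{for:pnkFormula}), I obtain $\sum_{k=1}^{n-3}p_{n,k}=(n-3)(n-2)!-p_{n,1}$, and substituting into $p_{n,0}=n!-\sum_{k=1}^{n-3}p_{n,k}$ gives the left equality.

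For the right equality I would substitute $p_{n,1}=(n-2)!(H_{n-2}-1)$ and use $n!=(n-2)!\,n(n-1)$, so that $n!-(n-3)(n-2)!=(n-2)!(n^2-2n+3)$; adding $p_{n,1}$ and factoring out $(n-2)!$ collapses the constant terms to give $(n-2)!(H_{n-2}+n^2-2n+2)$, as claimed.

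I expect the evaluation of $\sum_{k=1}^{n-3}p_{n,k}$ to be the only real obstacle, and the summation-interchange above is what makes it clean. A direct attack instead telescopes the relation $p_{n,k}=p_{n,k-1}-\frac{(n-2)!}{k}$ recorded before the theorem to get $p_{n,k}=p_{n,1}-(n-2)!(H_k-1)$ and then sums; that route works but requires the auxiliary identity $\sum_{k=1}^{m}H_k=(m+1)H_m-m$, which the interchange of summation neatly sidesteps. I would therefore present the interchange computation as the main argument.
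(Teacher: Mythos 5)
Your proposal is correct and follows essentially the same route as the paper's proof: subtract $\sum_{k=1}^{n-3}p_{n,k}$ from $n!$ using the bound of $n-3$ occurrences, interchange the order of summation to get $(n-2)!\sum_{i=2}^{n-2}\frac{i-1}{i}$, and recognize $(n-2)!\sum_{i=2}^{n-2}\frac{1}{i}$ as $p_{n,1}$. The only addition is your explicit verification of the closed form $(n-2)!(H_{n-2}+n^2-2n+2)$, which the paper leaves as an implicit simplification.
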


\begin{proof}
For any $n$-permutation, the maximum number of occurrences of the pattern $p$ is $n-3$, so to compute the number of $n$-permutations avoiding the pattern we only need to subtract the number of permutations having $k$ occurrences for $k$ from $1$ to $n-3$ from $n!$.  This gives
\begin{eqnarray*}
p_{n,0}&=&n!-\sum_{j=1}^{n-3}p_{n,j}\\
&=&n!-(n-2)!\sum_{j=1}^{n-3}\sum_{i=j+1}^{n-2}\frac{1}{i}\\
&=&n!-(n-2)!\sum_{i=2}^{n-2}\frac{1}{i}(i-1)\\
&=&n!-(n-2)!\left((n-3)-\sum_{i=2}^{n-2}\frac{1}{i}\right)\\
&=&n!-(n-3)(n-2)!+p_{n,1}.\\
\end{eqnarray*}
We are done.\end{proof}

Next we study exponential generating functions (e.g.f.s) for the numbers $p_{n,k}$.

 \begin{theorem}\label{thmGenerFunc1} Let $$P_k(t)=\sum_{n\geq k+3}\frac{p_{n,k}}{(n-2)!}t^{n-2}.$$ Then, $P_k(t)$ satisfies the following differential equation with initial condition $P_k(0)=0$: $$P'_k(t)=\frac{k!P_k(t)+t^k}{k!(1-t)}+\frac{t^{k+1}}{(1-t)^2}.$$\end{theorem}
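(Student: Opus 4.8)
The plan is to bypass any closed form for $P_k(t)$ and read the differential equation off directly from the first-difference structure of the harmonic numbers. First I would reindex the sum by $m=n-2$; the Proposition (via $p_{n,k}/(n-2)!=H_{n-2}-H_k$) then gives
$$P_k(t)=\sum_{m\geq k+1}(H_m-H_k)\,t^{m},$$
so the coefficient $a_m:=H_m-H_k$ satisfies $a_m-a_{m-1}=1/m$ for all $m\geq k+1$. The one point needing care is the lower boundary: the term that would sit at $m=k$ has coefficient $H_k-H_k=0$, so adopting the convention $a_k=0$ keeps the first-difference relation valid all the way down to $m=k+1$ and makes the telescoping below exact.

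The key step is to multiply by $(1-t)$ and telescope. Because $a_k=0$, the shifted series satisfies $\sum_{m\geq k+1}a_{m-1}t^{m}=t\,P_k(t)$, and therefore
$$(1-t)P_k(t)=\sum_{m\geq k+1}(a_m-a_{m-1})\,t^{m}=\sum_{m\geq k+1}\frac{t^{m}}{m}.$$
I would then differentiate both sides. Differentiation cancels the $1/m$ on the right, collapsing it to the geometric series $\sum_{m\geq k+1}t^{m-1}=t^{k}/(1-t)$, while the product rule turns the left side into $(1-t)P_k'(t)-P_k(t)$. This yields the compact identity
$$(1-t)P_k'(t)-P_k(t)=\frac{t^{k}}{1-t}.$$

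It remains to rearrange this into the stated shape. Solving for $P_k'(t)$ gives
$$P_k'(t)=\frac{P_k(t)}{1-t}+\frac{t^{k}}{(1-t)^2},$$
and splitting $\frac{t^{k}}{(1-t)^2}=\frac{t^{k}}{1-t}+\frac{t^{k+1}}{(1-t)^2}$ lets me regroup the first two terms as $\frac{P_k(t)+t^{k}}{1-t}$, recovering the right-hand side of the displayed differential equation; the initial condition $P_k(0)=0$ holds trivially, since the lowest-degree term of $P_k$ is $t^{k+1}$ with $k\geq1$. I expect the only genuine obstacle to be the boundary bookkeeping at $m=k+1$—that is, verifying that the vanishing of $a_k$ makes the telescoping boundary term disappear so that the factor $(1-t)$ is produced cleanly. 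Once that is in place, every remaining manipulation is a routine geometric-series computation, and there is no need to invoke the closed form $\sum_{m\geq1}H_mt^m=-\ln(1-t)/(1-t)$ at all.
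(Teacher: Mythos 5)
Your method is genuinely different from the paper's and, up to the final sentence, entirely correct. The paper proves Theorem \ref{thmGenerFunc1} by taking the recursion (\ref{eqn:pnkRecursion}), multiplying by $t^{n-3}/(n-3)!$, and summing over $n\geq k+4$; you instead start from the closed form (\ref{for:pnkFormula}), so that $P_k(t)=\sum_{m\geq k+1}(H_m-H_k)t^m$, and use the telescoping of the differences $a_m-a_{m-1}=1/m$ to obtain $(1-t)P_k(t)=\sum_{m\geq k+1}t^m/m$ before differentiating. Your boundary convention $a_k=0$ is handled correctly, and this intermediate identity is attractive: it bypasses the recursion entirely and makes the hypergeometric closed form in the Corollary following the theorem transparent, since that closed form is precisely $\frac{1}{1-t}\sum_{m\geq k+1}t^m/m$.

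However, the final identification is where the proposal, read as a proof of the printed statement, breaks: what you derived is
$$P_k'(t)=\frac{P_k(t)+t^k}{1-t}+\frac{t^{k+1}}{(1-t)^2},$$
whereas the printed equation expands to
$$P_k'(t)=\frac{P_k(t)}{1-t}+\frac{t^k}{k!\,(1-t)}+\frac{t^{k+1}}{(1-t)^2},$$
and these coincide only when $k!=1$, i.e.\ for $k\leq 1$. Your claim to have ``recovered the right-hand side'' silently replaces $t^k/(k!(1-t))$ by $t^k/(1-t)$. The resolution is that your equation is the correct one and the printed statement carries a slip inherited from the paper's own proof: there, the $n=k+3$ term of $P_k'(t)=\sum_{n\geq k+3}p_{n,k}\,t^{n-3}/(n-3)!$ is recorded as $t^k/k!$, but since $p_{k+3,k}=k!$ it actually equals $\frac{p_{k+3,k}}{k!}t^k=t^k$; redoing the paper's computation with the correct boundary term yields exactly your equation. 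You can confirm the printed version fails for $k=2$: from Table \ref{k1234}, $P_2(t)=\frac{1}{3}t^3+\frac{7}{12}t^4+\cdots$, so $P_2'(t)=t^2+\cdots$, while the printed right-hand side begins $\frac{1}{2}t^2+\cdots$; your right-hand side begins $t^2+\cdots$ as required. So your argument is a valid (and arguably cleaner) proof of the corrected statement, but you must say explicitly that the equation you obtain differs from the displayed one by the factor $k!$ on the $t^k$ term, and that the displayed equation (and the boundary term in the paper's proof) needs that correction; as written, the assertion that your expression equals the displayed right-hand side is false for $k\geq 2$.
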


\begin{proof} Note that $p_{k+3,k}=k!$ and $p_{i,k}=0$ for $i\leq k+2$. We begin with the recursion proven in Theorem~\ref{lem:pnkSatisfiesRec}.  Namely,
$$p_{n,k}=(n-2)p_{n-1,k}+(n-3)!,$$ which we can write as
$$p_{n,k}=(n-3)p_{n-1,k}+p_{n-1,k}+(n-3)!.$$
Multiply both sides by $\frac{t^{n-3}}{(n-3)!}$ and sum over all $n\geq k+4$:
$$\sum_{n\geq k+4}\frac{p_{n,k}t^{n-3}}{(n-3)!}=t\sum_{n\geq k+4}\frac{p_{n-1,k}t^{n-4}}{(n-4)!}+\sum_{n\geq k+4}\frac{p_{n-1,k}t^{n-3}}{(n-3)!}+\sum_{n\geq k+4}t^{n-3}$$
$$P'_k(t)-\frac{t^k}{k!}=tP'_k(t)+P_k(t)+\frac{t^{k+1}}{1-t}.$$
And thus,
$$P'_k(t)=\frac{k!P_k(t)+t^k}{k!(1-t)}+\frac{t^{k+1}}{(1-t)^2}.$$
\end{proof}

\begin{corollary} Solving the differential equations in Theorem {\rm \ref{thmGenerFunc1}} for $k=1,2,3,4$ we get e.g.f.s that we record in Table {\rm\ref{k1234}}. Solving this equation for general $k$ using Mathematica  produces the answer:
$$P_k(t)=\frac{{t^{1 + k}} _2F_1[1, 1 + k, 2 + k, t]}{(1 + k) (1 - t)},$$ where the hypergeometric function is defined for $|t|<1$ by the power series
$$_2F_1(a,b;c;t)=\sum_{n=0}^{\infty}\frac{(a)_n(b)_n}{(c)_n}\frac{t^n}{n!}$$ provided that $c$ does not equal $0$, $-1$, $-2,\ldots$. Here $(q)_n$ is the Pochhammer symbol defined by
$$(q)_n=\begin{cases} 1 & \mbox{if }n=0, \\ q(q+1)\cdots(q+n-1) & \mbox{if }n>0. \end{cases}$$
\end{corollary}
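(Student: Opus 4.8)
The plan is to prove the closed form in the way the corollary suggests---by solving the first-order linear differential equation of Theorem~\ref{thmGenerFunc1}---and then to reconcile the solution with the displayed hypergeometric expression, the decisive observation being that the particular $\,_2F_1$ occurring here reduces to an elementary series. Since the equation is linear of first order and carries the initial condition $P_k(0)=0$, its solution is unique; hence it is enough to reduce the claimed expression to an explicit power series and to check that it agrees term by term with $P_k(t)=\sum_{n\ge k+3}\frac{p_{n,k}}{(n-2)!}t^{n-2}$.

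To solve the equation I would write it in standard linear form and use the integrating factor $\mu(t)=\exp\!\left(-\int\frac{dt}{1-t}\right)=1-t$. Multiplying through by $1-t$ collapses the left-hand side to $\frac{d}{dt}\big[(1-t)P_k(t)\big]$, while the right-hand side becomes the sum of a monomial and the single term $\frac{t^{k+1}}{1-t}$. Integrating from $0$ to $t$ and using $P_k(0)=0$ then yields $(1-t)P_k(t)$ as an explicit antiderivative whose only non-elementary part is $\int_0^{t}\frac{u^{k+1}}{1-u}\,du=\sum_{m\ge k+2}\frac{t^{m}}{m}$, the remaining terms being polynomial; so $P_k(t)$ is $\frac{1}{1-t}$ times this antiderivative.

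The crucial step is to bring the displayed $\,_2F_1$ into the same form. Using $(1)_n=n!$ to cancel the factorial in the defining series, together with the telescoping ratio $\frac{(1+k)_n}{(2+k)_n}=\frac{1+k}{n+k+1}$, one finds
\[
{}_2F_1(1,1+k;2+k;t)=(1+k)\sum_{n\ge 0}\frac{t^{n}}{n+k+1},
\]
so that, after the index shift $m=n+k+1$, the claimed expression equals $\frac{1}{1-t}\sum_{m\ge k+1}\frac{t^{m}}{m}$. Because multiplication by $\frac{1}{1-t}$ replaces the coefficients of a series by their partial sums, the coefficient of $t^{\,n-2}$ in this series is $\sum_{i=k+1}^{n-2}\frac{1}{i}=H_{n-2}-H_k$, which by (\ref{for:pnkFormula}) is exactly $p_{n,k}/(n-2)!$, the coefficient of $t^{\,n-2}$ in the defining series of $P_k(t)$. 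The two series therefore agree term by term, proving the corollary.

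I expect the main obstacle to be this hypergeometric reduction: one has to recognize that $\,_2F_1(1,1+k;2+k;t)$ reduces to an elementary function, carry out the Pochhammer cancellation correctly, and then align the summation range after the shift $m=n+k+1$ with the partial harmonic sums $H_{n-2}-H_k$. Once the $\,_2F_1$ has been rewritten in elementary form, the remaining verification is a routine comparison of coefficients.
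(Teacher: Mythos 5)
Your closing verification is correct and is, by itself, a complete proof of the displayed formula: using $(1)_n=n!$ and $(1+k)_n/(2+k)_n=(1+k)/(n+k+1)$ you correctly obtain ${}_2F_1(1,1+k;2+k;t)=(1+k)\sum_{n\ge 0}t^n/(n+k+1)$, so the claimed expression equals $\frac{1}{1-t}\sum_{m\ge k+1}\frac{t^m}{m}$, whose coefficient of $t^{n-2}$ is $H_{n-2}-H_k=p_{n,k}/(n-2)!$ by (\ref{for:pnkFormula}); this matches the defining series of $P_k(t)$ term by term. Since the paper offers no proof of this corollary at all (it appeals to Mathematica), this direct comparison is genuinely more than the paper provides.

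However, the other half of your argument --- solving the equation of Theorem \ref{thmGenerFunc1} by the integrating factor $1-t$ and asserting that the result reconciles with the hypergeometric reduction --- fails, and the failure hides exactly in the ``remaining terms being polynomial'' that you never computed. Multiplying the printed equation by $1-t$ gives $\frac{d}{dt}\bigl[(1-t)P_k(t)\bigr]=\frac{t^k}{k!}+\frac{t^{k+1}}{1-t}$, so integrating with $P_k(0)=0$ yields $(1-t)P_k(t)=\frac{t^{k+1}}{(k+1)!}+\sum_{m\ge k+2}\frac{t^m}{m}$, whereas your reduction of the ${}_2F_1$ expression requires $(1-t)P_k(t)=\frac{t^{k+1}}{k+1}+\sum_{m\ge k+2}\frac{t^m}{m}$; these disagree for every $k\ge 2$. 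The culprit is not your reduction but a typo in Theorem \ref{thmGenerFunc1}: in its proof, the omitted $n=k+3$ term of $P_k'(t)=\sum_{n\ge k+3}p_{n,k}t^{n-3}/(n-3)!$ equals $\frac{p_{k+3,k}}{k!}\,t^k=t^k$ (since $p_{k+3,k}=k!$), not $\frac{t^k}{k!}$, so the equation should read $P_k'(t)=\frac{P_k(t)+t^k}{1-t}+\frac{t^{k+1}}{(1-t)^2}$. (Sanity check at $k=2$: the printed equation forces the $t^3$-coefficient of $(1-t)P_2(t)$ to be $\frac16$, while the defining series and Table \ref{k1234} require $\frac13$.) With the corrected equation your integrating-factor computation produces exactly $\sum_{m\ge k+1}t^m/m$ and all parts agree. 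Note also that your uniqueness argument rests on the premise that the defining series satisfies the printed equation, which is false for $k\ge 2$. So keep the coefficient comparison --- it proves the corollary --- but either correct the differential equation before claiming to solve it, or drop that paragraph; as written, your proof asserts an equality that does not hold.
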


\begin{table}
\begin{center}
\begin{tabular}{c|c|c}
$k$ & e.g.f. & expansion\\
\hline
& \\
1  & $\frac{t+\ln(1-t)}{t-1}$ & $1\frac{t^2}{2!}+5\frac{t^3}{3!}+26\frac{t^4}{4!}+154\frac{t^5}{5!}+1044\frac{t^6}{6!}+\ldots$\\
& \\
2  & $\frac{2 t+t^2+2 \ln(1-t)}{2 (t-1)}$ & $2\frac{t^3}{3!}+14\frac{t^4}{4!}+94\frac{t^5}{5!}+684\frac{t^6}{6!}+5508\frac{t^7}{7!}+\ldots$\\
& \\
3  & $\frac{6 t+3 t^2+2 t^3+6 \ln(1-t)}{6 (t-1)}$ & $6\frac{t^4}{4!}+54\frac{t^5}{5!}+444\frac{t^6}{6!}+3828\frac{t^7}{7!}+35664\frac{t^8}{8!}+\ldots$\\
& \\
4  & $\frac{12 t+6 t^2+4 t^3+3 t^4+12 \ln (1-t)}{12 (t-1)}$ & $24\frac{t^5}{5!}+264\frac{t^6}{6!}+2568\frac{t^7}{7!}+25584\frac{t^8}{8!}+270576\frac{t^9}{9!}+\ldots$\\
& \\
\end{tabular}
\caption{The e.g.f.s defined in Theorem {\rm \ref{thmGenerFunc1}} for the number of permutations with $k=1,2,3,4$ occurrences of the pattern $p$.}\label{k1234}
\end{center}
\end{table}

\section{Wilf-equivalence of $q_1$ and $q_2$}\label{sec-q1-q2}

In this section we prove the Wilf-equivalence of the patterns $q_1$ and $q_2$ defined in Figure \ref{allPatterns}.

Suppose that $1\leq x_1<x_2<\cdots<x_k\leq n$ and $1\leq y_1<y_2<\cdots<y_k\leq n$ are respective positions of two occurrences of a length $k$ pattern in an $n$-permutation. Then we say that the first occurrence is to the left of the second occurrence if $(x_1,x_2,\ldots,x_k)$ is lexicographically smaller than $(y_1,y_2,\ldots,y_k)$. Clearly, this relation defines a total order on the set of all occurrences of the pattern in the permutation.  Thus, unless a permutation happens to avoid a pattern, there must be the smallest occurrence, namely the leftmost one.

We establish Wilf-equivalence of $q_1$ and $q_2$ by providing a well-defined bijective map $g$ that turns $q_1$-avoiding permutations into $q_2$-avoiding permutations.

Given a $q_1$-avoiding permutation $\pi$ that also avoids $q_2$, we let $g(\pi)=\pi$, which is trivially bijective and well-defined on the set of $(q_1,q_2)$-avoiding permutations.

On the other hand, if a $q_1$-avoiding permutation $\pi$ contains an occurrence of $q_2$, we apply the following procedure.  Take the leftmost (lexicographically smallest) occurrence of $q_2$ and swap the second and third elements of this particular occurrence as shown schematically in Figure \ref{132versus123} below.  It is easy to see that an occurrence of $q_2$ will be turned into an occurrence of $q_1$.  Now repeat the procedure until there are no longer any occurrences of $q_2$ in the permutation to obtain $g(\pi)$. \\

\noindent
{\bf $g$ is well-defined.} First, we must justify that  the process described above terminates, thus resulting in a $q_2$-avoiding permutation.  To this end, suppose that $xyz$ is the leftmost occurrence of $q_2$ in $\pi$ (occurring in the positions $a<b<c$) as shown in the left half of Figure  \ref{132versus123}.  We will now verify that exchanging $y$ and $z$ does not create an occurrence of $q_2$ which is lexicographically smaller than the smallest occurrence of $q_2$ in $\pi$.  Thus at any point in the process, turning the leftmost occurrence of $q_2$ into an occurrence of $q_1$ in the prescribed way will never introduce an occurrence of $q_2$ which is lexicographically smaller than the occurrence we have just removed.  This ensures that the process will eventually terminate.  In the following discussion we refer to Figure \ref{132versus123}.

\begin{figure}[h]
\begin{center}
\includegraphics[scale=0.5]{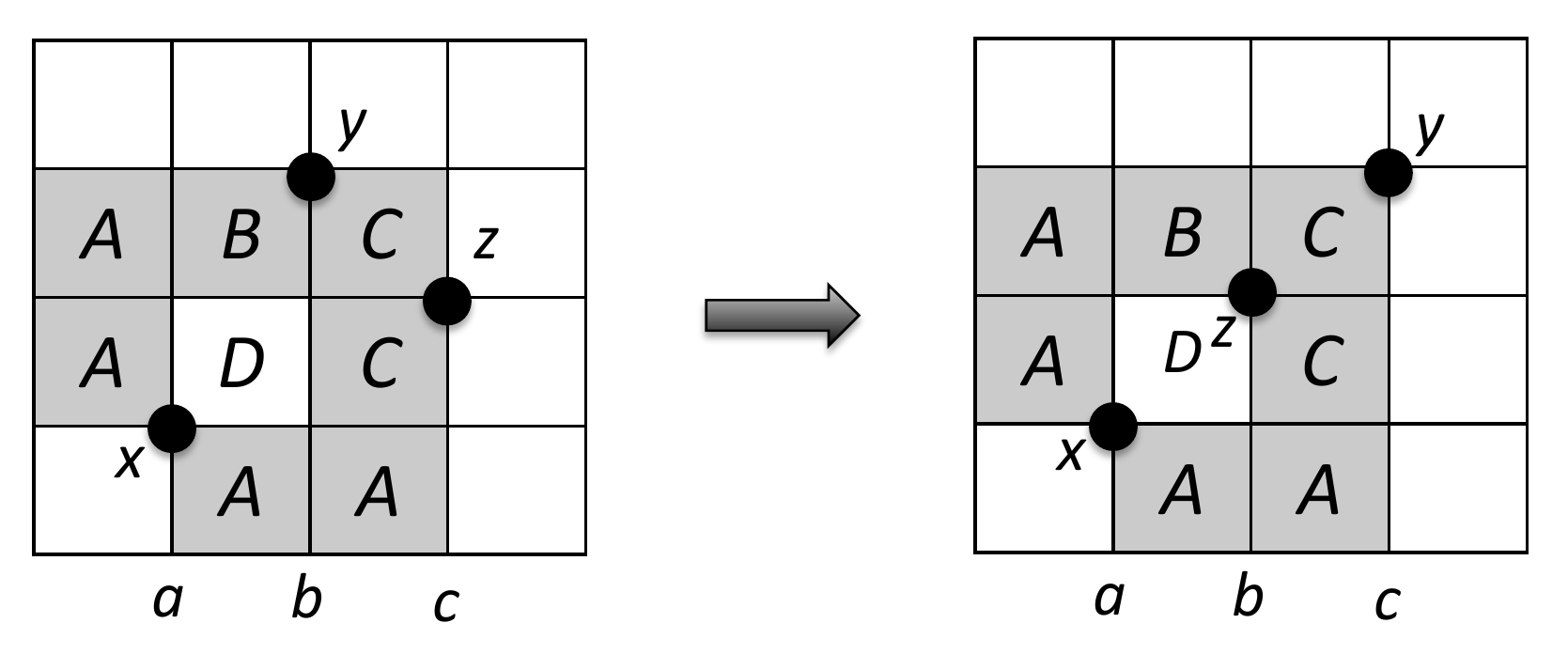}
\vspace{-10pt}
\caption{Turning an occurrence of $q_2$ into an occurrence of $q_1$.}\label{132versus123}
\end{center}
\end{figure}

Indeed, the areas marked by $A$ in $\pi$ must be empty because $xyz$ is an occurrence of $q_2$ (we indicate this by shading the area). The area marked by $B$ must  be empty because $xyz$ is the lexicographically smallest occurrence (otherwise any element $y'\in B$ would give an occurrence of $q_2$, $xy'z$, which is lexicographically smaller than $xyz$). By a similar reason the areas marked by $C$ must be empty (otherwise, any element $z'\in C$ would give an occurrence of $q_2$, $xyz'$, which is lexicographically smaller than $xyz$). Also note that at the first step of the algorithm, the area marked by $D$ must be empty, because otherwise any element $t\in D$ would give an occurrence of $q_1$ in $\pi$, namely $xty$, but $\pi$ is $q_1$-avoiding.  However, after some number of occurrences of $q_2$ have been turned into occurrences of $q_1$, $D$ is not guaranteed to be empty.  Therefore, we do not shade $D$ as we can make an argument that applies to an arbitrary step of the algorithm described.  Note that we can say something about the elements inside $D$, namely that if $D$ contains two or more elements, these elements must be increasing (otherwise, $xyz$ would not be the smallest occurrence of $q_2$).

Suppose now that turning $xyz$ into $xzy$ creates an occurrence $x'y'z'$ of $q_2$ that is lexicographically smaller than $xyz$.  It must be the case that $x'=x$. To justify this, note that if $x'\neq x$, then one of $y'$ or $z'$ must be $y$ or $z$, because if this weren't so, $x'y'z'$ would have been a preexisting occurrence of $q_2$ that was to the left of $xyz$, a contradiction.  Since one of $y'$ or $z'$ must be $y$ or $z$, $x'$ could not possibly be located in the top left unshaded area.  Thus, $x'<x$, and like before $x'yz$ would have been a preexisting occurrence of $q_2$ to the left of $xyz$, again a contradiction.  This fully justifies that $x=x'$, which then implies that $z'=z$ or $z'=y$.  This is simply because as mentioned before, one of $y'$ or $z'$ must be $y$ or $z$, but $y'\neq y$ and $y'\neq z$ as if $y'$ were either $y$ or $z$ this would force $xy'z'$ to be to the right of $xyz$, another contradiction.  Thus it must be the case that $z'=z$ or $z'=y$.

So, either $xy'z$ or $xy'y$ is an occurrence of $q_2$ to the left of $xyz$. In the first case, $y'$ is above the $B$ area, and thus $xy'y$ would be an occurrence of $q_2$ in $\pi$ which is to the left of $xyz$, a contradiction. In the second case, $y'$ is again above the $B$ area, because if it were above the $C$ areas, $xy'y$ would not be to the left of $xyz$, and we get exactly the same contradiction. Thus we do not create an occurrence of $q_2$ to the left of $xyz$ at any particular step of the algorithm.  Therefore, the process is well-defined and it eventually terminates when all of occurrences of $q_2$ have been removed.

Note that the number of occurrences of $q_2$ in $\pi$ is not necessarily equal to the number of occurrences of $q_1$ in $g(\pi)$. For example, the permutation 1432 has 3 occurrences of $q_2$, while $g(1432)=1234$ has 4 occurrences of $q_1$. \\

\noindent
{\bf $g$ is invertible.} In order to show that $g$ is invertible, and thus is bijective, we need to analyze sequences of consecutive steps of the algorithm which leads us to considering a more refined structure presented in Figure \ref{132versus123refined}.  Like in Figure \ref{132versus123}, suppose that $x$ is the smallest element of the leftmost occurrence of $q_2$.

\begin{figure}[h]
\begin{center}
\includegraphics[scale=0.5]{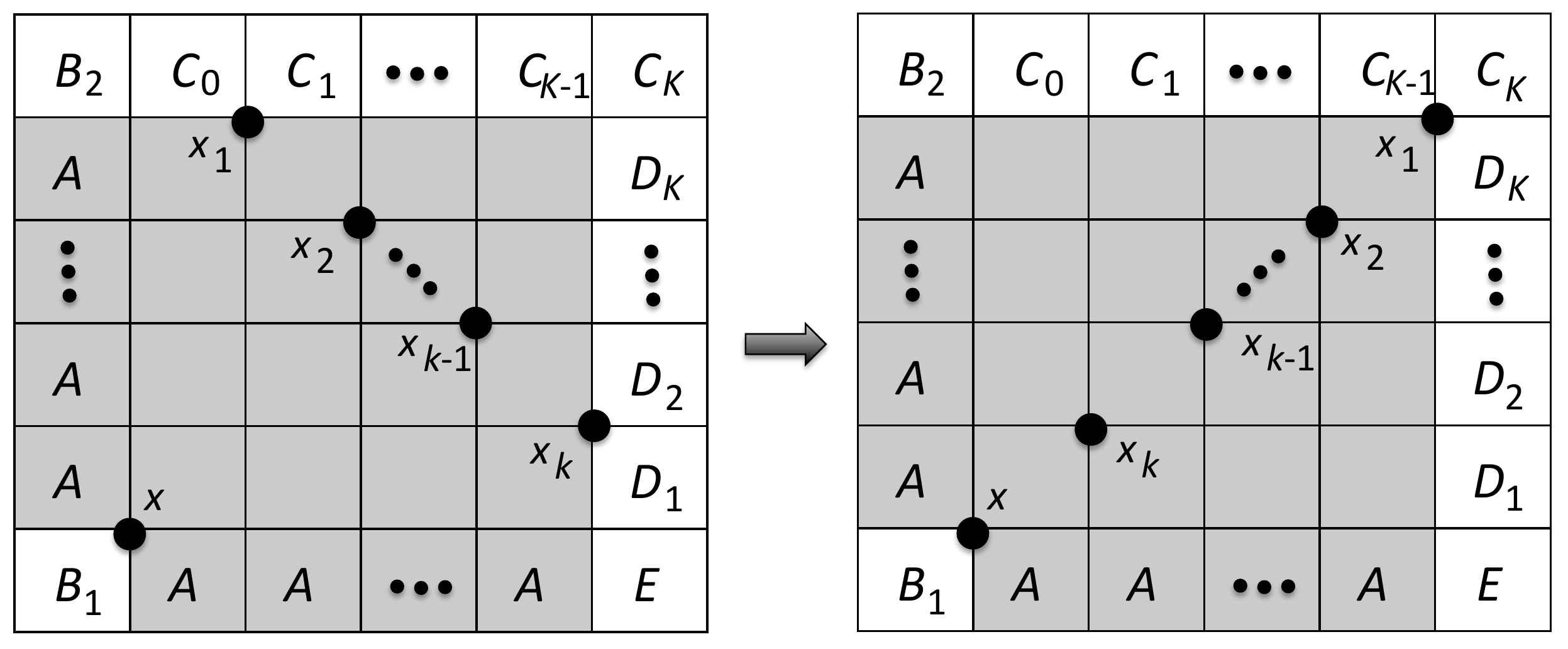}
\vspace{-10pt}
\caption{A sequence of steps in turning a cluster of occurrences of $q_2$ into occurrences of $q_1$.}\label{132versus123refined}
\end{center}
\end{figure}

Define $M_x=\{x,x_1,x_2,\ldots,x_k\}$ be the set of all elements involved in any occurrence of $q_2$ having $x$ as the smallest element.  We shall refer to the set $M_x$ of elements as a \textit{cluster} of occurrences of $q_2$ and it can be readily seen that this cluster contains $\binom{k}{2}$ occurrences of $q_2$.  By the definition of $M_x$, the squares marked with $A$ in Figure \ref{132versus123refined} must be empty.  Also, note that the elements $x_1$, $x_2,\ldots,x_k$ must be in decreasing order from left to right as shown in Figure \ref{132versus123refined}.  This is because if $x_i>x_j$ for $i<j$ then $xx_ix_j$ would be an occurrence of $q_1$, which is a contradiction on the first step because the permutation is $q_1$-avoiding.  It is also a contradiction at any other step because we will soon show that when performing the steps of the algorithm we never introduce an occurrence of $q_1$ that doesn't have $x$ as the smallest element, i.e. we never introduce $q_1$ into any other cluster.  By a similar argument, the elements inside the square defined by the elements $x$, $x_1$ and $x_k$ all belong to the set $M_x$.

We will now make three important observations.  First, the elements in $M_x$ cannot be involved in occurrences of $q_2$ involving elements outside of $M_x$.  For $x$ it follows by definition of $M_x$, while if $x_i$ is the smallest element of an occurrence $x_iab$ of $q_2$ then $xab$ would also be an occurrence of $q_2$ and thus $a,b$ must belong to $M_x$.  If $x_i$ is the largest or second largest element of an occurrence $ax_ib$ or $abx_i$, then $a$ must lie inside the square labeled $B_1$, contradicting the fact that $x$ is involved as the smallest element in the leftmost occurrence of $q_2$.  In short, clusters are disjoint.

Second, note that any other cluster of occurrences of $q_2$ must be entirely contained inside a single square labeled with $C_i$ or $D_j$.  This is true because if there were an occurrence of $q_2$ that involved two of these squares, one can show that at least one of the elements in $M_x$ would be present in a shaded area for the pattern $q_2$.  Thus when performing the steps of the algorithm, we are guaranteed to never introduce an occurrence of $q_1$ whose smallest element also happens to be the smallest element of some other cluster.

After performing $\binom{k}{2}$ steps of our algorithm to the cluster of occurrences of $q_2$ pictured on the left in Figure \ref{132versus123refined}, we find ourselves in the situation pictured on the right in that figure.  We now make our third observation, namely that the elements in $M_x$ now cannot be involved in any occurrences of $q_1$ involving elements outside of $M_x$.  To justify this, note that any element in the square labeled $B_1$ could not have been the smallest element of an occurrence of $q_2$ before the switch, therefore such elements can not be the smallest element of an occurrence of $q_1$ after the switch. Therefore the elements in $M_x$ could only possibly be the smallest elements of occurrences of $q_1$ outside of this cluster.  This is also not possible.  Suppose that some element $z$ of $M_x$ exists such that $zab$ is an occurrence of $q_1$ with $b\not \in M_x$ (certainly if $b\in M_x$, then $a\in M_x$).  If $b$ lies in any square labeled with $D_i$, then $xx_1z$ was an occurrence of $q_2$ before the switch, a contradiction.  If $b$ were in the square labeled with $C_i$ for $1\leq i\leq k$, then $xx_1b$ was an occurrence of $q_1$ before the switch which is also a contradiction by our first observation; a similar argument holds if $b\in C_0$.

All of these observations together lead to the fact that after the algorithm will be implemented, the resulting permutation will contain at least one maximal increasing subsequence consisting of at least three elements so that choosing any three of them yields a $q_1$ pattern, like the sequence $(x,x_k,x_{k-1},\ldots,x_1)$ shown on the right in Figure \ref{132versus123refined}.  Such a subsequence had to be introduced by performing the algorithm to a cluster shown on the left in Figure \ref{132versus123refined}.  It is then straightforward to invert $g$ by turning subsequences of the form $x<x_k<x_{k-1}<\cdots<x_2<x_1$ any three elements of which form an occurrence of the mesh pattern $q_1$ into $xx_1x_2\cdots x_k$ where $k\geq 3$ without changing the positions of $x,x_1,x_2,\ldots,x_k$.

\section{Mesh patterns over 132-avoiding permutations}\label{sec-132-av}

In this section, we consider the mesh patterns $s_1$, $s_2$, $s_3$, $t_1$ and $t_2$ defined in Figure \ref{allPatterns}. Throughout the section, we will be using the following fact that is well-known and is easy to see: If a permutation $\pi$ avoids the (classical) pattern 132, then $\pi=\pi_1n\pi_2$ where every element of $\pi_1$, if any, is larger than every element of $\pi_2$, if any, and $\pi_1$ and $\pi_2$ are any 132-avoiding permutations on their respective elements. It is well-known (see, e.g. \cite{kit}) that there are $C_n$ $n$-permutations avoiding 132.

\subsection{The pattern $s_1$ has Catalan's distribution}\label{sub-s1}

For a permutation $\pi$, we let $N_{\tau}(\pi)$ denote the number of occurrences of a pattern $\tau$ in $\pi$.

\begin{lemma}\label{lemmaEasy1}  For a $132$-avoiding $n$-permutation $\pi$, $N_{s_1}(\pi)$ plus the number of right-to-left maxima in $\pi$ is equal to $n$.\end{lemma}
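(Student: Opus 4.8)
The plan is to prove a sharper, entry-by-entry statement from which the lemma follows by summation. I would classify each occurrence of $s_1$ by its smaller (left-hand) entry; since an occurrence is an ascent $\pi_a<\pi_b$ with $a<b$, every occurrence has a well-defined left-hand entry, and every occurrence is counted exactly once this way. I would then show that each entry of $\pi$ that is \emph{not} a right-to-left maximum is the left-hand entry of exactly one occurrence of $s_1$, whereas each right-to-left maximum is the left-hand entry of none. Because $\pi$ has $n$ entries, summing these local counts gives $N_{s_1}(\pi)$ equal to the number of non-right-to-left maxima, i.e. $N_{s_1}(\pi)=n-(\text{number of right-to-left maxima})$, which is the assertion.

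Recall from Figure~\ref{allPatterns} that an occurrence of $s_1$ is a pair $\pi_a<\pi_b$ with $a<b$ whose box is empty, meaning no entry of $\pi$ lies simultaneously strictly between positions $a$ and $b$ and strictly between the values $\pi_a$ and $\pi_b$. First I would dispose of the right-to-left maxima: if $\pi_i$ is a right-to-left maximum, then nothing to its right exceeds it, so $\pi_i$ is not the smaller entry of any ascent at all, and hence the left-hand entry of no occurrence of $s_1$. Next, existence for a non-right-to-left maximum $\pi_i$: let $j$ be the least index with $j>i$ and $\pi_j>\pi_i$, which exists precisely because $\pi_i$ fails to be a right-to-left maximum. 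Minimality of $j$ forces every entry strictly between positions $i$ and $j$ to be smaller than $\pi_i$, so the box of $(\pi_i,\pi_j)$ is empty and $(\pi_i,\pi_j)$ is an occurrence of $s_1$. Note that neither of these two steps uses $132$-avoidance.

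The crux of the argument, and the one place where $132$-avoidance is indispensable, is uniqueness. Suppose $(\pi_i,\pi_b)$ is \emph{any} occurrence of $s_1$ having $\pi_i$ as its left-hand entry, and let $j$ be the nearest larger neighbour to the right as above, so that $i<j\le b$. If $j<b$, then emptiness of the box of $(\pi_i,\pi_b)$ forces $\pi_j\notin(\pi_i,\pi_b)$, and since $\pi_j>\pi_i$ this means $\pi_j>\pi_b$; but then the three positions $i<j<b$ carry the values $\pi_i<\pi_b<\pi_j$, which is a classical $132$ pattern, contradicting that $\pi$ avoids $132$. Hence $b=j$, so the partner of $\pi_i$ is forced to be its nearest larger neighbour to the right, and the occurrence with left-hand entry $\pi_i$ is unique. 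I expect this to be the main obstacle, precisely because existence and the right-to-left-maximum case hold for every permutation; $132$-avoidance is exactly what collapses the a priori many candidate partners of a non-right-to-left maximum down to a single one.

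For completeness I would remark that an alternative induction on $n$ is available through the decomposition $\pi=\pi_1\,n\,\pi_2$, in which every entry of $\pi_1$ exceeds every entry of $\pi_2$. Writing $r(\sigma)$ for the number of right-to-left maxima of $\sigma$, one has $r(\pi)=1+r(\pi_2)$, and sorting occurrences of $s_1$ into those inside $\pi_1$, those inside $\pi_2$, and those pairing a right-to-left maximum of $\pi_1$ with the central entry $n$ yields $N_{s_1}(\pi)=N_{s_1}(\pi_1)+N_{s_1}(\pi_2)+r(\pi_1)$; the inductive hypothesis then closes the computation. However, the direct left-hand-entry bijection above is cleaner and pinpoints the exact role of $132$-avoidance, so that is the route I would write up.
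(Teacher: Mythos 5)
Your proof is correct, but it takes a genuinely different route from the paper's. The paper argues by induction on $n$ via the decomposition $\pi=\pi_1 n\pi_2$ of a $132$-avoiding permutation: both statistics split additively over $\pi_1 n$ and $\pi_2$ (no occurrence of $s_1$ straddles the two parts, and the right-to-left maxima of $\pi$ are exactly $n$ together with those of $\pi_2$), so applying the induction hypothesis to the two shorter permutations closes the argument; your final paragraph sketches essentially this induction, just with the bookkeeping $N_{s_1}(\pi)=N_{s_1}(\pi_1)+N_{s_1}(\pi_2)+\rmax(\pi_1)$ instead of grouping $\pi_1 n$. What you do instead is prove a sharper, pointwise statement: every entry that is not a right-to-left maximum is the bottom entry of exactly one occurrence of $s_1$ (existence via the nearest larger entry to the right, which needs no $132$-avoidance; uniqueness via a $132$-contradiction), while right-to-left maxima are bottoms of none, and the lemma follows by summing over entries. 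This refinement buys something real: the paper itself asserts it without proof later on --- in the second proof of Theorem \ref{thm:Boxed12IsCatalanTriangle}, where the top of the unique occurrence is identified as $n$ or the next larger element, and in the proof of Theorem \ref{thm:ReverseDiagonal12IsReverseCatalanTriangle}, where the non-bottom elements are identified with the right-to-left maxima --- so your argument supplies a justification for a fact the paper reuses, and it isolates exactly where $132$-avoidance is needed. The paper's induction is shorter, though as a side note its displayed chain of equalities is garbled (read literally it asserts $\rmax(\pi)=N_{s_1}(\pi)$, which is not the lemma); the intended additivity argument is the one your closing remark describes correctly.
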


\begin{proof} We can proceed by induction on the length of the permutation. The case $n=1$ is clear. If $n>1$, then $\pi=\pi_1n\pi_2$ where every element of $\pi_1$, if any, is larger than every element of $\pi_2$, if any.  Because of this fact, there cannot be any occurrence of $s_1$ in $\pi$ having one element in $\pi_1$ and the other one in $\pi_2$. We can now apply the induction hypothesis to $\pi_1n$ and $\pi_2$ to get the desired result:
\begin{eqnarray*}
\rmax(\pi)&=&\rmax(\pi_1n)+\rmax(\pi_2)\\
&=&N_{s_1}(\pi_1n)+N_{s_1}(\pi_2)\\
&=&N_{s_1}(\pi),\\
\end{eqnarray*} and thus the lemma holds.\end{proof}

Let $T(n,k)$ denote the number of $132$-avoiding $n$-permutations with $k$ occurrences of the pattern $s_1$. In particular,  clearly, $T(1,0)=1$ and $T(1,k)=0$ for $k>0$.

\begin{theorem}\label{thm:Boxed12IsCatalanTriangle} We have $T(n,k)=C(n-1,k)$, where $C(n,k)$ is the $(n,k)$-th entry of the Catalan triangle defined in the introduction.  Thus, $$T(n,k)=\frac{(n-k){n-1+k\choose n-1}}{n}.$$\end{theorem}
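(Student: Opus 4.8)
The plan is to exploit the standard decomposition of a $132$-avoiding permutation together with Lemma~\ref{lemmaEasy1} to set up a recursion for $T(n,k)$, and then to verify that this recursion is solved by the Catalan triangle.

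First I would record a multiplicative consequence of Lemma~\ref{lemmaEasy1}. Writing a $132$-avoiding $n$-permutation as $\pi=\pi_1 n\pi_2$, with every entry of $\pi_1$ exceeding every entry of $\pi_2$, the reasoning in the proof of Lemma~\ref{lemmaEasy1} shows that no occurrence of $s_1$ can straddle $\pi_1$ and $\pi_2$, so $N_{s_1}(\pi)=N_{s_1}(\pi_1 n)+N_{s_1}(\pi_2)$. The block $\pi_1 n$ is a $132$-avoiding permutation of length $|\pi_1|+1$ whose only right-to-left maximum is $n$; hence Lemma~\ref{lemmaEasy1} forces $N_{s_1}(\pi_1 n)=|\pi_1|$. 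Therefore $N_{s_1}(\pi)=|\pi_1|+N_{s_1}(\pi_2)$, a clean identity in which $\pi_1$ enters only through its length.

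Next I would convert this into a recursion. Letting $i=|\pi_1|$ range over $0,1,\dots,n-1$, the factor $\pi_1$ may be any of the $C_i$ $132$-avoiding permutations on the top $i$ values (each contributing exactly $i$ occurrences), while $\pi_2$ ranges over the $132$-avoiding permutations of length $n-1-i$ carrying $k-i$ occurrences of $s_1$. Since the labels are forced by the decomposition, this yields
\[
T(n,k)=\sum_{i=0}^{n-1} C_i\,T(n-1-i,\,k-i),
\]
with boundary values $T(0,0)=1$, $T(0,k)=0$ for $k\neq 0$, and $T(m,\ell)=0$ for $\ell<0$. I would then prove $T(n,k)=C(n-1,k)$ by induction on $n$: the case $n=1$ is immediate, and in the inductive step the hypothesis turns the display into $U(n,k):=\sum_{i=0}^{n-1} C_i\,C(n-2-i,\,k-i)$, reducing the claim to $U(n,k)=C(n-1,k)$. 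Using the additive recursion $C(N,j)=C(N-1,j)+C(N,j-1)$ of the triangle in the form $C(N,j)-C(N,j-1)=C(N-1,j)$, a termwise subtraction gives $U(n,k)-U(n,k-1)=\sum_i C_i\,C(n-3-i,k-i)=U(n-1,k)$, so $U$ satisfies exactly the Catalan recursion $U(n,k)=U(n-1,k)+U(n,k-1)$; since $U(n,0)=1=C(n-1,0)$, a short secondary induction on $k$ identifies $U(n,k)$ with $C(n-1,k)$. The stated closed form then follows from the explicit formula $C(n-1,k)=\frac{(n-k)(n-1+k)!}{n!\,k!}$.

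I expect the main obstacle to be this last step, namely reconciling the Catalan \emph{convolution} recursion with the simple additive recursion that defines the triangle; the telescoping computation above is the crux. It is worth emphasizing that this collapse is only possible because $\pi_1$ enters the identity $N_{s_1}(\pi)=|\pi_1|+N_{s_1}(\pi_2)$ through its length alone, so that its $C_i$ choices factor out cleanly and produce a genuine Catalan convolution; without that observation the recursion would not simplify.
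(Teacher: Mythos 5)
Your argument is correct, but it takes a genuinely different route from both of the paper's proofs. The paper's first proof constructs $132$-avoiding $(n+1)$-permutations by inserting $n+1$ either at the front or immediately after a right-to-left maximum, notes that insertion after the $i$-th right-to-left maximum creates exactly $i$ new occurrences of $s_1$, and thereby verifies the summation recursion $T(n+1,k)=\sum_{j=0}^{k}T(n,j)$; its second proof is a bijective block-moving argument around $n$ that establishes $T(n,k)=T(n-1,k)+T(n,k-1)$ directly. You instead extract from the decomposition $\pi=\pi_1n\pi_2$ and Lemma \ref{lemmaEasy1} the identity $N_{s_1}(\pi)=|\pi_1|+N_{s_1}(\pi_2)$ (correct: $n$ is the unique right-to-left maximum of $\pi_1n$, so $N_{s_1}(\pi_1n)=|\pi_1|$), obtaining the Catalan convolution $T(n,k)=\sum_{i}C_i\,T(n-1-i,k-i)$, and then verify algebraically that $C(n-1,k)$ solves this convolution. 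Notably, your convolution is exactly recursion (\ref{eq:CatalanTriangleIdentity}), which the paper derives for the pattern $s_2$, and your telescoping step is in effect an independent, purely algebraic proof of Theorem \ref{thm:CatalanTriangleRelation}: the paper obtains that identity as a corollary of the distribution result, whereas you prove the identity first and deduce the distribution from it, so the ``new relation'' on Catalan's triangle becomes a lemma rather than a byproduct. The trade-off is that the paper's approach, by running the same additive recursion for both $s_1$ and $s_2$, yields a bijective equidistribution of these two patterns, which your route as written does not address. One spot in your write-up does need explicit care, though it is bookkeeping rather than a gap: the term $i=n-1$ of your convolution is $C_{n-1}T(0,k-n+1)$, which the induction hypothesis does not cover (it would formally read $C(-1,k-n+1)$), and when $k=n-1$ your telescoping applies the additive recursion just outside the triangle, where it fails under the zero-outside convention (there $C(N,N+1)=0$ while $C(N-1,N+1)+C(N,N)=C_N$). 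Both issues disappear if you dispose of $k=n-1$ directly ($\pi_2$ is then empty, so $T(n,n-1)=C_{n-1}=C(n-1,n-1)$), handle $k=n-2$ via the Catalan recursion $\sum_{i=0}^{n-2}C_iC_{n-2-i}=C_{n-1}=C(n-1,n-2)$, and run the telescoping only for $k\leq n-3$, where every index stays inside the triangle.
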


We supply two proofs of the theorem. The second proof is essentially the exact argument used in the proof of Theorem \ref{thm:12TopRightIsCatalanTriangle}; however, the main reason that we state it here is that it allows us to provide a combinatorial explanation for the fact that the patterns $s_1$ and $s_2$ are equidistributed.

\begin{proof} [First proof of Theorem {\rm\ref{thm:Boxed12IsCatalanTriangle}}] We use the following recurrence for the Catalan triangle: \begin{equation}\label{eq:CatalanTriangleRecurrence}
C(n,k)=\displaystyle\sum_{j=0}^{k}C(n-1,j),\end{equation} with $C(0,0)=1$ and $C(0,k)=0$ for $k>0$.  Since we already know that $T(1,0)=1$ and that $T(1,k)=0$ for $k>0$, we need only show that $T(n+1,k)$ satisfies (\ref{eq:CatalanTriangleRecurrence}).

Consider creating a $132$-avoiding $(n+1)$-permutation from a $132$-avoiding $n$-permutation by inserting $n+1$. The only valid positions to insert $n+1$ is either in front of the $n$-permutation, or right after a right-to-left maximum, otherwise an occurrence of the pattern 132 will be introduced. Also, inserting $n+1$ never eliminates an occurrence of $s_1$. Moreover, inserting $n+1$ in front of an $n$-permutation does not introduce an occurrence of $s_1$, while inserting it immediately to the right of the $i$-th right-to-left maximum (counted from left to right) increases the number of occurrences of $s_1$ by $i$.  This is simply because each right-to-left maximum to the left of $n$, together with $n$, will contribute new occurrences of $s_1$.

We are now able to provide a combinatorial proof of $T(n+1,k)=\displaystyle\sum_{j=0}^{k}T(n,j)$. Take all 132-avoiding $n$-permutations having exactly $j$ occurrences of $s_1$, where $0\leq j\leq k$.  Now for each permutation, insert $n+1$ in the unique place to make the total number of occurrences of $s_1$ equal $k$. Lemma \ref{lemmaEasy1} states for a 132-avoiding $n$-permutation the number of occurrences of $s_1$ plus the number of right-to-left maxima is equal to $n$.  This lemma coupled with the fact that $k\leq n$ guarantees that we always can insert $n$ into the proper place in a permutation counted by $T(n,j)$ to obtain a permutation counted by $T(n+1,k)$ and thus the recursion is verified.
\end{proof}

\begin{proof}[Second proof of Theorem {\rm\ref{thm:Boxed12IsCatalanTriangle}}]  We will prove combinatorially that $T(n,k)$ satisfies $T(n,k)=T(n-1,k)+T(n,k-1)$ for $k<n$, a known recursion for the Catalan triangle.

Again, suppose that $\pi=\pi_1n\pi_2$.  It is not difficult to see that every element in $\pi_1$ is the bottom element of exactly one occurrence of $p$; the top element of this occurrence is either $n$ or the next element larger than itself.  The permutations that correspond to $\pi_1$ being empty are counted by the term $T(n-1,k)$.  We will now show that $T(n,k-1)$ is responsible for counting those permutations where $\pi_1$ is not empty.  We accomplish this by providing a general method to take a permutation $\tau=\tau_1n\tau_2$ counted by $T(n,k-1)$ and move some number of consecutive elements from $\tau_2$ to $\tau_1$ to obtain $\pi=\pi_1n\pi_2$.  This move will ensure that $\pi$ has exactly one more occurrence of $s_1$ than $\tau$ and will also guarantee that $\pi_1$ is not empty.  Note here that $\tau_2$ is guaranteed to be non-empty so this move can always be made.  If $\tau_2$ were to be empty, there would be $n-1$ occurrences of $s_1$, which couldn't possibly be counted by $T(n,k-1)$ as $k<n$.

A more refined structure of a 132-avoiding $n$-permutation $\tau$ is
$$\tau=X_1x_1X_2x_2\ldots X_ix_inY_1y_1Y_2y_2\ldots Y_jy_j$$
where each $X_s$ and $Y_t$ are possibly empty 132-avoiding permutations on their respective elements, $\{x_s\}_{s=1}^i$ is the sequence of right-to-left maxima in $\tau_1$, and $\{y_t\}_{t=1}^j$ is the sequence of right-to-left maxima in $\tau_2$.  Also whenever $s<t$, each element of $X_s$, if any, is larger than every element of $X_t$, if any, and each element of $Y_s$, if any, is larger than every element of $Y_t$, if any.  Again recall that in this case $y_1$ exists, i.e. there is at least one element to the right of $n$.

Now consider the permutation
$$\pi=X_1x_1X_2x_2\ldots X_ix_iY_1y_1nY_2y_2\ldots Y_jy_j$$
obtained from $\tau$ by moving the largest element $y_1$ to the right of $n$, together with the preceding block $Y_1$, to the other side of $n$.  We claim that the 132-avoiding $n$-permutation $\pi$ has exactly one more occurrence of the pattern $s_1$ as desired and that this operation is clearly invertible. Using reasoning described at the beginning of the proof, every element in $Y_1$, if any, was the bottom element of exactly one occurrence of $s_1$ in $\tau$. After the move, it continues to be the bottom element of such an occurrence in $\pi$. On the other hand, $y_1$ was not the bottom element of an occurrence of $s_1$ in $\tau$.  However, after the move, it becomes the bottom element of exactly one occurrence of $s_1$ in $\pi$, namely $y_1n$. Thus, each permutation counted by $T(n,k-1)$ can be transformed using the move described above to a permutation counted by $T(n,k)$.  Because this process is invertible, there are no other permutations counted by $T(n,k)$ apart from those counted by $T(n-1,k)+T(n,k-1)$. \end{proof}

As a byproduct to our research, we define the following set of sequences counted by the Catalan numbers.

\begin{proposition}\label{byproductCatalan1} Let $A_n$ denote the number of sequences $\{a_1,a_2,\ldots,a_n\}$ satisfying $a_1=0$ and $0\leq a_{i}\leq i-1-\displaystyle\sum_{j=1}^{i-1}a_j$ for $i\in\{2,\ldots,n\}$. Then, $A_n$ is given by $C_n$, the $n$-th Catalan number.\end{proposition}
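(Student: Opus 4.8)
The plan is to realize each admissible sequence as the record of an insertion history of a $132$-avoiding permutation, and thereby set up a bijection with the set of $132$-avoiding $n$-permutations, which is known to have cardinality $C_n$. Concretely, I would build a $132$-avoiding permutation by inserting the values $1,2,\ldots,n$ one at a time, each inserted value being momentarily the largest, exactly as in the first proof of Theorem~\ref{thm:Boxed12IsCatalanTriangle}. Recall from that argument that when the current largest value is placed into a $132$-avoiding permutation the only legal slots are in front (creating no new occurrence of $s_1$) or immediately after a right-to-left maximum, and that inserting just after the $\ell$-th right-to-left maximum (counted from the left) creates exactly $\ell$ new occurrences of $s_1$.

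First I would define $a_i$ to be the number of new occurrences of $s_1$ produced at the step that inserts the value $i$, with $a_1=0$ since the first insertion creates nothing. Then the total number of occurrences of $s_1$ after the first $m$ insertions is $\sum_{j=1}^{m}a_j$, and at the step inserting $i$ the distinct attainable increments are precisely $0,1,\ldots,r_{i-1}$, where $r_{i-1}$ is the number of right-to-left maxima present just before that insertion. Because each increment is realized by exactly one slot, the value $a_i$ pins down the insertion position uniquely, so the whole sequence $(a_1,\ldots,a_n)$ reconstructs the permutation and the correspondence is a bijection.

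The key step is to identify the upper bound on $a_i$ with the stated constraint. Here I would invoke Lemma~\ref{lemmaEasy1}: for the $132$-avoiding $(i-1)$-permutation present before inserting $i$, the number of right-to-left maxima equals $(i-1)$ minus the number of occurrences of $s_1$, that is $r_{i-1}=(i-1)-\sum_{j=1}^{i-1}a_j$. Hence $0\le a_i\le i-1-\sum_{j=1}^{i-1}a_j$, which is exactly the defining inequality of the proposition, and conversely any sequence meeting these bounds is realizable one slot at a time. This matches the admissible sequences of length $n$ with the $132$-avoiding $n$-permutations, giving $A_n=C_n$.

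I expect the only real subtlety to be the bookkeeping of the previous paragraph: checking that the partial sum $\sum_{j<i}a_j$ genuinely equals the running count of $s_1$ occurrences, and that Lemma~\ref{lemmaEasy1} converts this count into the available number of insertion slots, so that the range of $a_i$ and the proposition's bound coincide exactly. As a self-contained alternative that avoids permutations entirely, one could instead let $f(i,s)$ count admissible prefixes of length $i$ with $\sum_{j=1}^{i}a_j=s$; the defining inequality gives at once $f(i,s)=\sum_{S=0}^{s}f(i-1,S)$, which is the Catalan triangle recursion (\ref{eq:CatalanTriangleRecurrence}), so $f(i,s)=C(i-1,s)$ and $A_n=\sum_{s=0}^{n-1}C(n-1,s)=C(n,n-1)=C_n$.
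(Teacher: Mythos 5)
Your main argument is correct and is, at its core, the same bijection the paper uses: encode each $132$-avoiding $n$-permutation by the history of where each new maximum is inserted, the legal slots being the front of the permutation or immediately after a right-to-left maximum. The difference lies in how the bound $i-1-\sum_{j<i}a_j$ gets identified with the number of available slots. The paper takes $a_i$ to be the label of the chosen slot and proves by a fresh induction that the permutation built from $(a_1,\ldots,a_m)$ has exactly $m-\sum_{j\le m}a_j$ right-to-left maxima, by tracking how an insertion at slot $k$ turns the $k$ preceding right-to-left maxima into non-maxima while creating one new one. You instead take $a_i$ to be the number of new occurrences of $s_1$ created at step $i$ --- the same quantity, since inserting after the $\ell$-th right-to-left maximum creates exactly $\ell$ new occurrences, as recorded in the first proof of Theorem~\ref{thm:Boxed12IsCatalanTriangle} --- and then invoke Lemma~\ref{lemmaEasy1} to convert the running $s_1$-count into the number of right-to-left maxima. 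This buys a shorter argument that reuses machinery already established (it does silently rely on the fact, also from that proof, that insertions never destroy occurrences of $s_1$, which is what makes the partial sums equal the running count); the paper's induction is self-contained within the proof. Your closing alternative --- counting admissible prefixes $f(i,s)$ with $\sum_{j\le i}a_j=s$, observing that the defining inequality yields $f(i,s)=\sum_{S=0}^{s}f(i-1,S)$, i.e. the recursion (\ref{eq:CatalanTriangleRecurrence}), hence $f(i,s)=C(i-1,s)$ and $A_n=\sum_{s=0}^{n-1}C(n-1,s)=C(n,n-1)=C_n$ --- is genuinely different from anything in the paper's proof: it avoids permutations entirely and, as a bonus, gives the refined statement that the number of admissible sequences with prescribed total $s$ is $C(n-1,s)$, which the paper only obtains indirectly through the equidistribution with $s_1$.
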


\begin{proof} We will prove that the sequences of length $n$ in question are in one-to-one correspondence with 132-avoiding $n$-permutations, which are known to be counted by $C_n$.

Consider creating all 132-avoiding $n$-permutations by inserting $n$ in every allowable position in every 132-avoiding $(n-1)$-permutation.  It is easily shown, and was already used above, that the only allowable positions to insert $n$ without introducing an occurrence of 132 is either at the beginning, or immediately after a right-to-left maximum. If at every step we label these possible positions to insert $n$ into an $(n-1)$-permutation having $k$ right-to-left maxima from left to right with $0,1,2,\ldots,k$, then we can encode any 132-avoiding permutation as a sequence of choices of where we inserted the current largest element.  Any such sequence must begin with a 0, as the first step always begins with inserting 1 at the beginning of the empty permutation.  For example, the 132-avoiding permutation $\pi=785346291$ is encoded by the sequence $000102013$.

We claim that the set of all sequences $\{a_1,a_2,\ldots a_n\}$ in question are precisely the sequences that encode all 132-avoiding permutations using the method described in the previous paragraph.  This will follow if we can show that for such a sequence the number of right-to-left maxima in the corresponding 132-avoiding $n$-permutation is given by $n-\displaystyle\sum_{j=1}^{n}a_j$.  We prove this statement by induction on $n$.

The case $n=1$ is straightforward as the only sequence is 0 and has corresponding permutation 1, which has exactly $1-0=1$ right-to-left maximum.

Suppose now that $\{a_1,a_2,\ldots,a_n\}$ is a sequence describing insertion of the maximum elements satisfying the conditions specified on the $a_i$'s (that is $0\leq a_{i}\leq i-1-\displaystyle\sum_{j=1}^{i-1}a_j$ for $2\leq i\leq n$), and the number of right-to-left maxima in the corresponding $n$-permutation $\sigma$ is $n-\displaystyle\sum_{j=1}^{n}a_j$. Suppose that $a_{n+1}=k$, where $0\leq k\leq n-\sum_{j=1}^na_j$.  In this situation, we chose to insert $(n+1)$ into $\sigma$ at the position labeled $k$.  It is easy to see that inserting $n+1$ at position $k$ will force each right-to-left maxima preceding this position to become non-right-to-left maxima, but will always create one as $(n+1)$ is a right-to-left maximum.  Thus the number of right-to-left maxima in the obtained permutation will be decreased by $k-1$ when $k\geq1$ or increased by 1 if $k=0$. Thus the number of right-to-left maxima in the resulting permutation is $n-\displaystyle\sum_{j=1}^{n}a_j-(a_{n+1}-1)=n+1-\displaystyle\sum_{j=1}^{n+1}a_j$ and the statement is verified.
\end{proof}

\subsection{The pattern $s_2$ has Catalan's distribution}\label{sub-s2}

Let $M(n,k)$ denote the number of $132$-avoiding $n$-permutations with $k$ occurrences of $s_2$. Clearly, $M(1,0)=1$ and $M(1,k)=0$ for $k>0$. The goal of this section is to prove the following theorem.

\begin{theorem}\label{thm:12TopRightIsCatalanTriangle} We have $M(n,k)=C(n-1,k)$, where $C(n,k)$ is the $(n,k)$-th entry of the Catalan triangle.  Thus, $$M(n,k)=\frac{(n-k){n-1+k\choose n-1}}{n}.$$\end{theorem}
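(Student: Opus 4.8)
The plan is to show that $M(n,k)$ satisfies the additive recursion $M(n,k)=M(n-1,k)+M(n,k-1)$ for $k<n$, which together with the initial values $M(1,0)=1$ and $M(1,k)=0$ for $k>0$ forces $M(n,k)=C(n-1,k)$. This mirrors the second proof of Theorem \ref{thm:Boxed12IsCatalanTriangle}, so the first task is to record the analogue for $s_2$ of the structural observation used there. Writing $\pi=\pi_1 n\pi_2$, I would first check that in any occurrence of $s_2$ the larger (top) element must be a right-to-left maximum of $\pi$. Consequently no occurrence can have its top inside $\pi_1$, since $n$ lies to its right, and an element of $\pi_2$ can never be the top of an occurrence whose bottom lies in $\pi_1$, since every element of $\pi_1$ exceeds every element of $\pi_2$. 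Hence every element of $\pi_1$ is the bottom of exactly one occurrence of $s_2$, whose top is $n$, and all remaining occurrences lie entirely within $\pi_2$. This yields the clean identity $N_{s_2}(\pi)=|\pi_1|+N_{s_2}(\pi_2)$.

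With this in hand, the permutations with $\pi_1$ empty are exactly those of the form $\pi=n\pi_2$; deleting $n$ gives a $132$-avoiding $(n-1)$-permutation with the same number of occurrences, so these are counted by $M(n-1,k)$. For the permutations with $\pi_1$ nonempty I would reuse the block-move of the cited proof verbatim. Refine the decomposition as $\tau=X_1x_1\cdots X_ix_i\,n\,Y_1y_1\cdots Y_jy_j$, where the $x_s$ and $y_t$ are the right-to-left maxima of $\tau_1$ and $\tau_2$, respectively, so that $y_1$ is the maximum of $\tau_2$. I would then send a permutation $\tau$ counted by $M(n,k-1)$ to $\pi=X_1x_1\cdots X_ix_i\,Y_1y_1\,n\,Y_2y_2\cdots Y_jy_j$ by moving the leading block $Y_1y_1$ of $\tau_2$ to the left of $n$. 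As in the cited proof, the hypothesis $k<n$ guarantees $\tau_2$ is nonempty (were it empty, $\tau$ would have $n-1$ occurrences, impossible for $M(n,k-1)$ when $k<n$), so the move is always available, and the result is again $132$-avoiding because $Y_1y_1$ consists of the largest elements of $\tau_2$.

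The crux, and the step I expect to require the most care, is verifying that this move increases the number of $s_2$-occurrences by exactly one and is invertible. Applying the identity above to $\tau_2=Y_1\,y_1\,(Y_2y_2\cdots)$ gives $N_{s_2}(\tau_2)=|Y_1|+N_{s_2}(\pi_2)$, whence $N_{s_2}(\tau)=|\tau_1|+|Y_1|+N_{s_2}(\pi_2)$, while $N_{s_2}(\pi)=(|\tau_1|+|Y_1|+1)+N_{s_2}(\pi_2)$; the count therefore rises by precisely one, the genuinely new occurrence being $y_1n$. For invertibility, given $\pi$ with $\pi_1$ nonempty I would locate the last (equivalently, smallest) right-to-left maximum $y_1$ of $\pi_1$ together with the block immediately preceding it, and send that block back across $n$; since these are the smallest elements of $\pi_1$ yet still exceed everything in $\pi_2$, they reconstitute the leading block of $\tau_2$ with $y_1$ as its maximum. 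Checking that this inverse is well defined and lands on every permutation counted by $M(n,k-1)$ then establishes the recursion and completes the proof.
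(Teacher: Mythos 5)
Your proposal is correct and follows essentially the same route as the paper's proof: the recursion $M(n,k)=M(n-1,k)+M(n,k-1)$, the decomposition $\pi=\pi_1n\pi_2$, and the move of the block $Y_1y_1$ across $n$ are exactly the paper's argument. Your two refinements are welcome but not a different approach: the explicit identity $N_{s_2}(\pi)=|\pi_1|+N_{s_2}(\pi_2)$ makes the ``exactly one more occurrence'' step an algebraic computation rather than the paper's element-by-element tracking, and your explicit inverse (locating the last right-to-left maximum of $\pi_1$ with its preceding block) fills in what the paper dismisses as ``clearly invertible''; you also correctly assign the $\pi_1$-empty case to $M(n-1,k)$, where the paper's text contains a slip.
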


\begin{proof}
We will prove combinatorially that $M(n,k)$ satisfies $M(n,k)=M(n-1,k)+M(n,k-1)$ for $k<n$, a known recursion for the Catalan triangle.

Again, if $\pi=\pi_1n\pi_2$ is an $n$-permutation avoiding the pattern 132, then each element of $\pi_1$, if any, is larger than every element of $\pi_2$, and $\pi_1$ and $\pi_2$ are any 132-avoiding permutations on their respective elements. Moreover, each element of $\pi_1$, if any, is the bottom element of a unique occurrence of the pattern $s_2$, namely the one involving the top element $n$.  It could not be the bottom element of another occurrence of $s_2$ because $n$ would always be present in the shaded area of $s_2$.  The permutations that correspond to $\pi_1$ being empty are counted by the term $M(n,k-1)$.  We will now show that $M(n,k-1)$ is responsible for counting those permutations where $\pi_1$ is not empty.  We accomplish this by providing a general method to take a permutation $\tau=\tau_1n\tau_2$ counted by $M(n,k-1)$ and move some number of consecutive elements from $\tau_2$ to $\tau_1$ to obtain $\pi=\pi_1n\pi_2$.  This move will ensure that $\pi$ has exactly one more occurrence of $s_2$ than $\tau$ and will also guarantee that $\pi_1$ is not empty.  Note here that $\tau_2$ is guaranteed to be non-empty so this move can always be made.  If $\tau_2$ were to be empty, there would be $n-1$ occurrences of $s_2$, which could not possibly be counted by $M(n,k-1)$ as $k<n$.

A more refined structure of a 132-avoiding $n$-permutation $\tau$ is
$$\tau=X_1x_1X_2x_2\ldots X_ix_inY_1y_1Y_2y_2\ldots Y_jy_j$$
where each $X_s$ and $Y_t$ are possibly empty 132-avoiding permutations on their respective elements, $\{x_s\}_{s=1}^i$ is the sequence of right-to-left maxima in $\tau_1$, and $\{y_t\}_{t=1}^j$ is the sequence of right-to-left maxima in $\tau_2$.  Also whenever $s<t$, each element of $X_s$, if any, is larger than every element of $X_t$, if any, and each element of $Y_s$, if any, is larger than every element of $Y_t$, if any.  Again recall that in this case $y_1$ exists, i.e. there is at least one element to the right of $n$.

Now consider the permutation
$$\pi=X_1x_1X_2x_2\ldots X_ix_iY_1y_1nY_2y_2\ldots Y_jy_j$$
obtained from $\tau$ by moving the largest element $y_1$ to the right of $n$, together with the preceding block $Y_1$, to the other side of $n$.  We claim that the 132-avoiding $n$-permutation $\pi$ has exactly one more occurrence of the pattern $s_2$ as desired and that this operation is clearly invertible. Using reasoning described at the beginning of the proof, every element in $Y_1$, if any, was the bottom element of exactly one occurrence of the pattern $s_2$ in $\tau$. After the move, it continues to be the bottom element of such an occurrence in $\pi$, however the top element of the occurrence changes from $y_1$ to $n$. On the other hand, $y_1$ was not the bottom element of an occurrence of $s_2$ in $\tau$.  However, after the move, it becomes the bottom element of exactly one occurrence of $s_2$, namely $y_1n$. Thus, each permutation counted by $M(n,k-1)$ can be transformed using the move described above to a permutation counted by $M(n,k)$.  Because this process is invertible, there are no other permutations counted by $M(n,k)$ apart from those counted by $M(n-1,k)+M(n,k-1)$.
\end{proof}

Thus, we obtain a new combinatorial interpretation of $C(n-1,k)$, namely the number of 132-avoiding $n$-permutations having exactly $k$ occurrences of $s_2$.  Using this new interpretation, we were able to provide a relation which seems to be new on the Catalan triangle, and we record it as the following theorem.

\begin{theorem}\label{thm:CatalanTriangleRelation} For the Catalan triangle,
$$C(n,k)=\sum_{i=0}^{k}C_iC(n-i-1,k-i).$$
\end{theorem}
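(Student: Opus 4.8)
The plan is to give a combinatorial proof that exploits the interpretation of the Catalan triangle just established in Theorem \ref{thm:12TopRightIsCatalanTriangle}, namely that $C(n-1,k)=M(n,k)$ is the number of $132$-avoiding $n$-permutations with exactly $k$ occurrences of $s_2$. Reading the claimed identity through this lens, the left-hand side is $C(n,k)=M(n+1,k)$, the number of $132$-avoiding $(n+1)$-permutations with $k$ occurrences of $s_2$, and I would prove the relation by partitioning this set according to the size of the block lying to the left of the maximum element.

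First I would invoke the canonical decomposition $\pi=\pi_1(n+1)\pi_2$ of a $132$-avoiding $(n+1)$-permutation, in which every element of $\pi_1$ exceeds every element of $\pi_2$ and both factors are arbitrary $132$-avoiding permutations on their respective value sets. Setting $m=|\pi_1|$, the factor $\pi_1$ is a $132$-avoiding $m$-permutation (so there are $C_m$ choices for it), while $\pi_2$ is a $132$-avoiding $(n-m)$-permutation.

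The key step is to verify that $N_{s_2}(\pi)=m+N_{s_2}(\pi_2)$, independently of the internal structure of $\pi_1$. Building on the observations already made in the proof of Theorem \ref{thm:12TopRightIsCatalanTriangle}, I would examine where an occurrence of $s_2$ can sit: each element of $\pi_1$ is the bottom of exactly one occurrence, the one whose top is $n+1$, and is never the top of an occurrence, contributing exactly $m$; there are no occurrences internal to $\pi_1$, since for any ascending pair inside $\pi_1$ the element $n+1$ lands in the shaded top-right region; no occurrence can straddle $\pi_1$ and $\pi_2$, because the elements of $\pi_1$ are larger than, yet positioned before, those of $\pi_2$; and the occurrences lying inside $\pi_2$ are precisely the $s_2$-occurrences of $\pi_2$ taken on its own, because the larger elements of $\pi_1$ together with $n+1$ all precede $\pi_2$ and hence never fall in the relevant shaded region. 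This yields the stated additive splitting, so that $\pi_2$ must contribute $k-m$ occurrences.

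Grouping the $(n+1)$-permutations by $m$ and summing then gives $M(n+1,k)=\sum_{m=0}^{k}C_m\,M(n-m,k-m)$, and substituting $M(n+1,k)=C(n,k)$ and $M(n-m,k-m)=C(n-m-1,k-m)$ produces exactly $C(n,k)=\sum_{i=0}^{k}C_iC(n-i-1,k-i)$, as desired. I expect the main obstacle to be the careful bookkeeping in the cross-block analysis, that is, rigorously ruling out occurrences straddling $\pi_1$ and $\pi_2$ and confirming that the shading condition defining $s_2$ inside $\pi_2$ is genuinely unaffected by the larger blocks sitting to its left; once that point is pinned down, the summation and the reindexing are immediate.
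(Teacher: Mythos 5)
Your proposal is correct and follows essentially the same route as the paper's own proof: decompose a $132$-avoiding permutation as $\pi_1 N \pi_2$ around its maximum, observe that the left block contributes exactly $|\pi_1|$ occurrences of $s_2$ while occurrences inside $\pi_2$ are unaffected, sum over $|\pi_1|$ to get $M(n,k)=\sum_{i=0}^{k}C_iM(n-i-1,k-i)$, and translate via Theorem \ref{thm:12TopRightIsCatalanTriangle}. The only differences are a harmless index shift (working with $(n+1)$-permutations rather than $n$-permutations) and your more explicit verification of the cross-block cases, which the paper compresses into a single sentence.
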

\begin{proof}
As reasoned in the proof of Theorem \ref{thm:12TopRightIsCatalanTriangle}, if $\pi=\pi_1n\pi_2$ is an $n$-permutation avoiding the pattern 132, then each element of $\pi_1$, if any, is larger than every element of $\pi_2$, and $\pi_1$ and $\pi_2$ are any 132-avoiding permutations on their respective elements. Moreover, each element of $\pi_1$, if any, is the bottom element of a unique occurrence of the pattern $s_2$, namely the one involving the top element $n$.

Additionally, $\pi_1n$ has no affect on occurrences of $s_2$ inside $\pi_2$. Since there are $C_n$ 132-avoiding $n$-permutations, we get the following recursion for $M(n,k)$:
\begin{equation}\label{eq:CatalanTriangleIdentity}M(n,k)=\sum_{i=0}^{k}C_iM(n-i-1,k-i)\end{equation} with initial conditions $M(n,0)=1$ and $M(n,n)=0$ for all $n\geq 1$.  Theorem \ref{thm:12TopRightIsCatalanTriangle} verifies that equation (\ref{eq:CatalanTriangleIdentity}) is equivalent to the statement of the theorem.
\end{proof}

Using known formulas for both $C_i$ and $C(n,k)$, Theorem \ref{thm:CatalanTriangleRelation} gives rise to the following binomial identity.
\begin{corollary} \label{binom} We have
$$\frac{(n-k){n-1+k\choose n-1}}{n}=\sum_{i=0}^{k}\frac{{2i\choose i}}{i+1}\frac{(n-k-1){n-2i-2+k\choose n-i-2}}{n-i-1}.$$
\end{corollary}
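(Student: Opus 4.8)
The plan is to observe that the asserted identity is nothing more than the recursion $M(n,k)=\sum_{i=0}^{k}C_iM(n-i-1,k-i)$ of equation~(\ref{eq:CatalanTriangleIdentity}), established combinatorially inside the proof of Theorem~\ref{thm:CatalanTriangleRelation}, with every term rewritten through its known closed form. So I would not seek an independent (e.g.\ generating-function) argument; instead I would check that the closed forms match the three groups of factors in the statement term by term.

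First I would record the closed form supplied by Theorem~\ref{thm:12TopRightIsCatalanTriangle}, namely $M(m,j)=C(m-1,j)=\frac{(m-j)\binom{m-1+j}{m-1}}{m}$. Taking $(m,j)=(n,k)$ reproduces the left-hand side $\frac{(n-k)\binom{n-1+k}{n-1}}{n}$ verbatim, so the left-hand side of the corollary is exactly $M(n,k)$. Likewise, the factor $\frac{\binom{2i}{i}}{i+1}$ appearing in each summand is precisely $C_i$, by the definition $C_i=\frac{1}{i+1}\binom{2i}{i}$.

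The one nonroutine step is the substitution $(m,j)=(n-i-1,k-i)$ in the formula for $M(m,j)$: here $m-j=n-k-1$, $m-1+j=n-2i-2+k$, $m-1=n-i-2$, and $m=n-i-1$, whence $M(n-i-1,k-i)=\frac{(n-k-1)\binom{n-2i-2+k}{n-i-2}}{n-i-1}$, which is exactly the remaining factor in the sum. With these three identifications, the claimed identity reads off term by term as $M(n,k)=\sum_{i=0}^{k}C_iM(n-i-1,k-i)$, i.e.\ equation~(\ref{eq:CatalanTriangleIdentity}), and there is nothing further to prove. The main (indeed only) obstacle is the careful bookkeeping of these shifted arguments; once the substitution above is verified, the combinatorially proven relation behind Theorem~\ref{thm:CatalanTriangleRelation} delivers the identity.
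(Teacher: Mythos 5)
Your proposal is correct and matches the paper's own derivation: the paper likewise obtains the corollary by substituting the closed forms $C_i=\frac{1}{i+1}\binom{2i}{i}$ and $C(n-1,k)=\frac{(n-k)\binom{n-1+k}{n-1}}{n}$ into the combinatorially proven recursion of Theorem~\ref{thm:CatalanTriangleRelation}, which is itself just equation~(\ref{eq:CatalanTriangleIdentity}) rewritten via $M(n,k)=C(n-1,k)$. Your index bookkeeping $(m,j)=(n-i-1,k-i)$ is exactly the required (and correct) verification.
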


\begin{remark} We have used the same recursion for the Catalan triangle in the proof of Theorem {\rm\ref{thm:12TopRightIsCatalanTriangle}} and in the second proof of Theorem {\rm\ref{thm:Boxed12IsCatalanTriangle}}, which induces a bijective proof of the equidistribution of the mesh patterns $s_1$ and $s_2$ on $132$-avoiding permutations. \end{remark}

\subsection{The patten $s_3$ has the reverse Catalan distribution}\label{sub-s3}

We let $N(n,k)$ denote the number of $132$-avoiding $n$-permutations with $k$ occurrences of the pattern $s_3$. Clearly, $N(1,0)=1$ and $N(1,k)=0$ for $k>0$.

\begin{theorem}\label{thm:ReverseDiagonal12IsReverseCatalanTriangle} $N(n,k)=C(n-1,n-1-k)$, where $C(n,k)$ is the $(n,k)$-th entry of the Catalan triangle.  Thus, $$N(n,k)=\frac{(k+1){2(n-1)-k\choose n-1}}{n}.$$\end{theorem}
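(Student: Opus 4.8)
The plan is to derive the theorem from Theorem~\ref{thm:Boxed12IsCatalanTriangle} by showing that the distribution of $s_3$ on $132$-avoiding permutations is exactly the reverse of the distribution of $s_1$; that is, $N(n,k)=T(n,n-1-k)$. Granting this, the formula of Theorem~\ref{thm:Boxed12IsCatalanTriangle}, $T(n,j)=\frac{(n-j)\binom{n-1+j}{n-1}}{n}$, gives upon the substitution $j=n-1-k$ both $N(n,k)=C(n-1,n-1-k)$ and the closed form $\frac{(k+1)\binom{2(n-1)-k}{n-1}}{n}$ claimed in the statement. So the entire task is the combinatorial identity $N(n,k)=T(n,n-1-k)$, which I would prove by a count-preserving involution $\phi$ on $132$-avoiding $n$-permutations.

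First I would record the analogue of Lemma~\ref{lemmaEasy1} for $s_3$: writing $\ell(\pi)$ for the number of left-to-right maxima of $\pi$, I claim $N_{s_3}(\pi)=\ell(\pi)-1$ for every $132$-avoiding $\pi$. This should follow by induction on length exactly as in Lemma~\ref{lemmaEasy1}. Writing $\pi=\pi_1 n\pi_2$ with every element of $\pi_1$ larger than every element of $\pi_2$, one reads off from the shading of $s_3$ that no occurrence straddles $n$; that every occurrence confined to $\pi_2$ is destroyed because $n$ (or an element of $\pi_1$) falls into a shaded cell; that the occurrences inside $\pi_1$ are unaffected by the elements to their upper right and so are counted by $N_{s_3}(\pi_1)$; and that exactly one occurrence has $n$ as its top element, present precisely when $\pi_1$ is non-empty. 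Hence $N_{s_3}(\pi)=N_{s_3}(\pi_1)+[\pi_1\neq\varnothing]$, and since $\ell(\pi)=\ell(\pi_1)+1$ the claim closes by induction. Recall that Lemma~\ref{lemmaEasy1} already supplies the companion identity $N_{s_1}(\pi)=n-\rmax(\pi)$.

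Next I would define $\phi$ recursively by reversing the two factors of the canonical decomposition: $\phi$ fixes the empty permutation, and for $\pi=\pi_1 n\pi_2$ I set $\phi(\pi)=\phi(\pi_2)'\, n\, \phi(\pi_1)'$, where the prime denotes the order-preserving relabelling assigning the larger available values to the (now leftmost) block coming from $\pi_2$ and the smaller ones to the block coming from $\pi_1$. Since the image again has the shape (large block)\,$n$\,(small block), $\phi(\pi)$ is $132$-avoiding, and reversing twice restores the original decomposition, so $\phi$ is an involution. Its point is that it interchanges the two maxima statistics: from $\ell(\pi)=\ell(\pi_1)+1$ and $\rmax(\pi)=1+\rmax(\pi_2)$, an induction gives $\rmax(\phi(\pi))=\ell(\pi)$ and $\ell(\phi(\pi))=\rmax(\pi)$. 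Combining the three facts, for every $132$-avoiding $n$-permutation $\pi$,
\[
N_{s_1}(\phi(\pi))+N_{s_3}(\pi)=\bigl(n-\rmax(\phi(\pi))\bigr)+\bigl(\ell(\pi)-1\bigr)=\bigl(n-\ell(\pi)\bigr)+\bigl(\ell(\pi)-1\bigr)=n-1 .
\]
As $\phi$ is a bijection, $N(n,k)=\#\{\pi:N_{s_3}(\pi)=k\}=\#\{\sigma:N_{s_1}(\sigma)=n-1-k\}=T(n,n-1-k)$, which is exactly what is needed. Tracking the quadruple $(\rmax,\ell,N_{s_1},N_{s_3})$ through $\phi$ simultaneously records the promised joint equidistribution of the four statistics.

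I expect the main obstacle to be the $s_3$-lemma, specifically the shading bookkeeping that must (i) annihilate every occurrence confined to $\pi_2$ and (ii) leave exactly one occurrence with top $n$; pinning these two points down from the diagram of $s_3$ is the delicate part, whereas the involution and the final substitution are routine once the lemma is in hand. A secondary point requiring care is to phrase the relabelling in $\phi$ precisely enough that it is manifestly an involution and manifestly preserves $132$-avoidance.
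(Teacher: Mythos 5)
Your reduction---find a bijection on $132$-avoiding permutations sending the $s_3$-count to $n-1$ minus the $s_1$-count, then invoke Theorem~\ref{thm:Boxed12IsCatalanTriangle}---is exactly the paper's strategy, and the final index substitution is fine. But your key lemma, $N_{s_3}(\pi)=\ell(\pi)-1$ with $\ell$ the number of left-to-right maxima, is false, and the failure is precisely in the structural claims of your induction. For $s_3$, an occurrence $xy$ requires (besides the empty box between $x$ and $y$) that nothing lie below-and-left of $x$ and nothing lie above-and-right of $y$; equivalently, $x$ must be a left-to-right \emph{minimum} of $\pi$ and $y$ a right-to-left maximum of $\pi$. (This is what underlies the paper's characterization: the bottom elements of occurrences of $s_3$ are the elements that are both left-to-right minima and right-to-left maxima of the permutation obtained from $\pi$ by deleting its right-to-left maxima; you can check this against the paper's worked $25$-element example.) Consequently, in $\pi=\pi_1 n\pi_2$ the situation is the \emph{opposite} of what you assert: occurrences confined to $\pi_1$ are all destroyed by $n$ (it sits in the shaded region northeast of their top element), occurrences confined to $\pi_2$ all survive (no element of $\pi_1 n$ lies in any of their shaded cells), and the number of occurrences with top element $n$ equals the number of elements of $\pi_1$ that are simultaneously left-to-right minima and right-to-left maxima of $\pi_1$, which can be any number, not just $0$ or $1$. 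Concretely, $\pi=123$ refutes your lemma: $\ell(123)-1=2$, yet $N_{s_3}(123)=0$, since for the pair $12$ the element $3$ occupies the upper-right shaded square, for $23$ the element $1$ occupies the lower-left shaded square, and for $13$ the element $2$ occupies the middle box. By contrast, $N_{s_3}(213)=2$, both occurrences having top element $3$.

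This is not a repairable slip within your framework: the identity your argument needs, $N_{s_1}(\phi(\pi))+N_{s_3}(\pi)=n-1$, is itself false for your $\phi$; e.g. $\phi(123)=321$ gives $0+0\neq 2$. So even after replacing the lemma by the correct description of $s_3$-occurrences, your block-swapping involution does not transport the two statistics into each other. That is why the paper constructs a genuinely different involution $f$: two cyclic shifts, one on the positions $I_\pi$ of right-to-left maxima and one on the positions $A_\pi$ of bottom elements of $s_3$-occurrences together with the last position, followed by a proof that $I_{f(\pi)}=A_\pi$ and $A_{f(\pi)}=I_\pi$. Curiously, the distributional \emph{consequence} of your lemma is true---the statistic $\ell-1$ is indeed equidistributed with $N_{s_3}$ on $132$-avoiding permutations, as both have the reverse Catalan distribution---but equidistribution is exactly what has to be proved, and your proof asserts a pointwise equality that does not hold.
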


\begin{proof}
We explain this fact by establishing a bijection $f$ which we will show is
actually an involution between 132-avoiding $n$-permutations having $k$ occurrences ($0\leq
k\leq n-1$) of the pattern $s_3$ and 132-avoiding $n$-permutations having
$n-1-k$ occurrences of the pattern $s_1$ (of which there are $C(n-1,n-1-k)$ many by Theorem \ref{thm:Boxed12IsCatalanTriangle}).

It is easy to see that the bottom elements of occurrences of $s_3$ in a 132-avoiding permutation $\pi$
are precisely those elements that are both a left-to-right minimum and a
right-to-left maximum in the permutation obtained from $\pi$ by removing all right-to-left
maxima. At the same time, the only elements in $\pi$ that are not the bottom elements of an occurrence
of $s_1$ are precisely the right-to-left maxima of $\pi$.

We will provide a map that will turn
the position of a bottom element of an occurrence of $s_3$, as well as the rightmost
element of an $n$-permutation, into the position of a right-to-left maximum (which as mentioned before
can not be the bottom element of an occurrence of $s_1$) and vice-versa.

Let $\pi=\pi_1\pi_2\cdots\pi_n$ be a $132$-avoiding permutation with
right-to-left maxima in positions $1\leq i_1<i_2<\cdots<i_m=n$ and let $I_\pi=\{i_1,\ldots,i_m\}$.
Note that $\pi_{i_1}$ must be $n$.  Also, let the bottom elements of
occurrences of $s_3$ in $\pi$ be in positions $1\leq
a_1<a_2<\cdots<a_{s-1}<n$, $a_s:=n$ and let $A_\pi=\{a_1,\ldots,a_s\}$. Note that the sets
$I_\pi$ and $A_\pi$ are both guaranteed to contain $n$ and this is the only element they share in common.

We will first build an auxiliary permutation $\pi'=\pi'_1\pi'_2\cdots\pi'_n$ by cyclically shifting
the elements having positions in $I_\pi$ to the left.  Formally,
\begin{itemize}
\item $\pi'_i:=\pi_i\mbox{ if }i\not\in I_\pi$\\
\item $\pi'_{i_j}:=\pi_{i_{j+1}}\mbox{ for }j=1,2,\ldots,m-1\mbox{ and }$\\
\item $\pi'_{i_m}:=\pi_{i_1}=n$.
\end{itemize}

We now build the image of $\pi$,
$\sigma=\sigma_1\sigma_2\cdots\sigma_n=f(\pi)$ from $\pi'$ by cyclically shifting
the elements having positions in $A_\pi$ to the right.  Formally,
\begin{itemize}
\item $\sigma_i:=\pi'_i\mbox{ if }i\not\in A_\pi$\\
\item $\sigma_{a_i}:=\pi'_{a_{i-1}}\mbox{ for }2\leq i\leq s\mbox{ and }$\\
\item $\sigma_{a_1}=\pi'_{a_s}$.
\end{itemize}

\begin{figure}[ht]
\begin{center}
\includegraphics[scale=0.5]{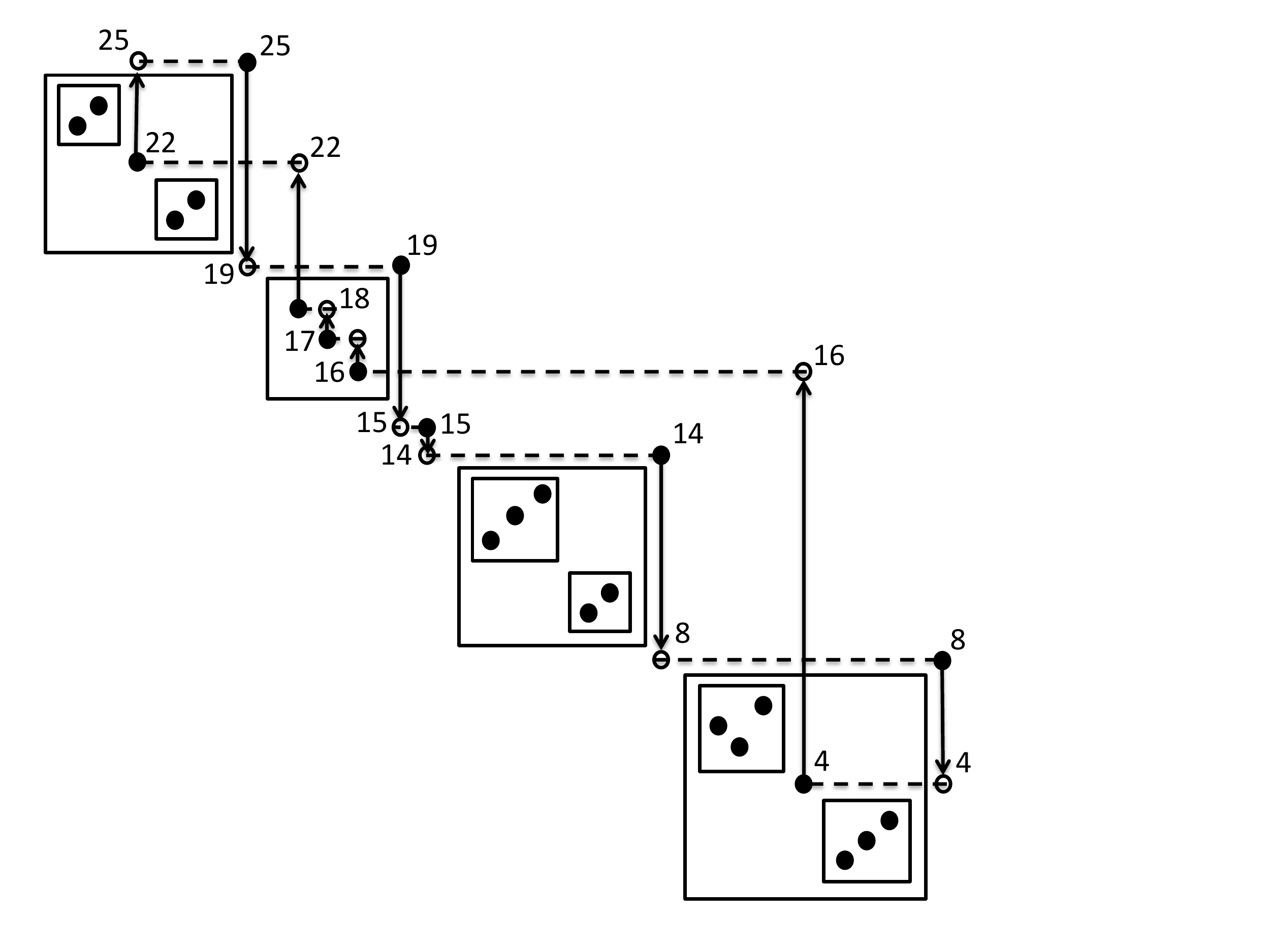}
\end{center}
\vspace{-15pt}
\caption{(23)(24)(22)(20)(21)(25)(18)(17)(16)(19)(15)(11)(12)(13)9(10)(14)65741238 and its
image under the involution $f$.}\label{exampleInvolution}
\end{figure}

To demonstrate the map, we will show how it acts on the 132-avoiding permutation
$$\pi=(23)(24)(22)(20)(21)(25)(18)(17)(16)(19)(15)(11)(12)(13)9(10)(14)65741238.$$
It is easy to verify that for $\pi$, $I_\pi=\{6,10,11,17,25\}$ and $A_\pi=\{3,7,8,9,21,25\}$.
We first perform the cyclic shift among the elements having positions in $I_\pi$ to the left to obtain
$$\pi'=(23)(24)(22)(20)(21)(19)(18)(17)(16)(15)(14)(11)(12)(13)9(10)86574123(25).$$  Now
we perform the cyclic shift among the elements having positions in $A_\pi$ to the right to obtain the image of the map
$$\sigma=(23)(24)(25)(20)(21)(19)(22)(18)(17)(15)(14)(11)(12)(13)9(10)8657(16)1234.$$  Note that while in the image permutation $\sigma$,
\[
I_\sigma=\{3,7,8,9,21,25\}\text{ and }A_\sigma=\{6,10,11,17,25\}
\]
so that $I_{f(\pi)}=A_\pi$ and $A_{f(\pi)}=I_\pi$.
In fact, we will show that this is always the case and with this fact, $f$ is easily seen to be an involution.

The permutation matrix for $\pi$ and $f(\pi)$ are shown in Figure \ref{exampleInvolution}.
In this figure, the elements in smaller boxes without arrows are unchanged in value/position, while the other solid circles belong to the original permutation and the open circles belong to its image; for each unfixed position, there is a vertical arrow from the value of $\pi$ to the value of $f(\pi)$ and the horizontal lines were included to show the levels on which elements are moved.

To check understanding, the reader can verify that image of $$\tau= (15)(14)(12)(13)(16)89(10)(11)5643127$$ under $f$ is $$(16)(15)(12)(13)(11)89(10)756(14)4123.$$

It was mentioned earlier that $I_{f(\pi)}=A_\pi$ and $A_{f(\pi)}=I_\pi$.  This is a direct consequence of the definition of the map and is most easily understood visually with help of Figure~\ref{exampleInvolution}.  However, for those who appreciate technical detail, we shall provide rigorous arguments in the following two paragraphs, for those who don't, feel free to skip over them.

Suppose we have a 132-avoiding permutation $\pi$ having corresponding sets $A_\pi$ and $I_\pi$ with $i_j\in I_\pi$ and $i_j\neq n$.  There is no need to consider the case when $i_j=n$ as $n\in A_{f(\pi)}$ by definition. Let us examine what happens at position $i_j$ throughout the map $f$.  After the first cyclic shift, we have that $\pi'_{i_j}$ has only one element larger than it located to its right, namely $n$, and has no elements smaller than it located to its left.  The latter fact is simply because if a smaller element did exist to its left then it would have been the bottom element of a 132 pattern in $\pi$ along with the elements at positions $i_j$ and $i_{j+1}$ (which exists since $i_j\neq n$).  Consider the first position, $a_k$, to the right of $i_j$ that is in $A_\pi$.  After the second cyclic shift, we claim that the value at position $a_k$ in $f(\pi)$ is greater than $\pi'_{i_j}$ and is thus a right-to-left maximum, ensuring that there is an occurrence of $s_3$ having bottom element at position $i_j$ and top element at position $a_k$.  To see this, realize that if there is no element of $A_\pi$ less than $i_j$, the claim is immediate as the element at position $a_k$ will be $n$ in this situation.  Otherwise, we know that the value of $\pi$ at position $a_{k-1}$ (which is precisely the value of the $f(\pi)$ at position $a_k$) is larger than $\pi'_{i_j}$ because if not, then the elements at positions $a_{k-1}$, $i_j$ and $i_{j+1}$ in $\pi$ would have been an occurrence of 132.  Also, in this case, the element $n$ must now precede $f(\pi)_{a_k}$, which ensures that it is also a right-to-left maximum.  This verifies that $i_j$ belongs to the set $A_{f(\pi)}$.

Now suppose we have a 132-avoiding permutation $\pi$ having corresponding sets $A_\pi=\{a_1,\ldots,a_s\}$ and $I_\pi$ with $a_k\in A_\pi$ and $a_k\neq n$.  Again, we will examine what happens at position $a_k$ throughout the map $f$.  To begin, there is only one element of $\pi$ to the right of position $a_k$ that is larger than $\pi_{a_k}$ and this element $i_j\in I_\pi$ must be a right-to-left maximum of $\pi$.  After the first cyclic shift, $\pi'_{i_j}$ is either smaller than $\pi'_{a_k}$ or $\pi'_{i_j}=n$.  If $\pi'_{i_j}\neq n$ and was larger than $\pi'_{a_k}$, then the elements at positions $a_k$, $i_j$ and $i_{j+1}$ in $\pi$ would have been an occurrence of 132.  In either case, we have that $\pi'_n=n$ is the only element following position $a_k$ that is larger than $\pi'_{a_k}$.  Additionally, we have that $\pi'_{a_k}<\pi'_{a_{k-1}}$ for $2\leq k\leq s$ and that $\pi_{a_1}<\pi'_{a_s}=n$.  These facts guarantee that after the second cyclic shift, the element of $f(\pi)$ at position $a_k$ will be a right-to-left maximum as $n$ will now be forced to either precede $f(\pi)_{a_k}$ or actually be equal to the value of $f(\pi)$ at this position.  This verifies that $a_k$ belongs to the set $I_{f(\pi)}$.

We will now show that $f$ is actually an involution by showing that the image of $f$ is actually a 132-avoiding permutation. To this end, for any permutation $\pi$, note that we do not change values of elements whose positions do not belong to $I_\pi \cup A_\pi=I_{f(\pi)}\cup A_{f(\pi)}$.  Thus, if an occurrence of the pattern $132$ did exist in $f(\pi)$, it must involve three elements, one of which must be in $I_\pi \cup A_\pi$. Suppose $i\in I_\pi \cup A_\pi$. If $i\in I_\pi$, then $i\in A_{f(\pi)}$ and therefore only one element to the right of position $i$ can be larger than $f(\pi)_i$.  If $i\in A_\pi$, then $i\in I_{f(\pi)}$ and therefore no element to the right of position $i$ can be larger than $f(\pi)_i$.  These facts guarantee that the element at position $i$ can not play the role of 1 in a 132 pattern.  Similarly, one can show that $i$ can not play the role of 3 in an occurrence of the pattern 132, since either there are no elements smaller than $\pi_i$ to the left of it, or everything to the left of $\pi_i$ is larger than everything to the right of it.  Finally, $i\neq n$ can not play the role of 2 in an occurrence of 132 because again, since either there are no elements smaller than $\pi_i$ to the left of it, or there are no elements smaller than $\pi_i$ to the left of the nearest right-to-left maximum preceding $\pi_i$.
\end{proof}

\begin{remark} The fact that the map $f$ in the proof of Theorem {\rm \ref{thm:ReverseDiagonal12IsReverseCatalanTriangle}} is actually an involution proves a more general fact, namely that the pairs of statistics  $(s_3,n-1-s_1)$ and $(n-1-s_1,s_3)$ are equidistributed on $132$-avoiding permutations. In fact,  the involution $f$ actually gives a more general fact on joint equidistribution of quadruples of statistics $(s_3,v(s_3),n-1-s_1,v(non s_1))$
and $(n-1-s_1,v(non s_1),s_3,v(s_3))$ where $v(p_3)$ is a binary vector
showing positions in which the bottom elements of occurrences of $s_3$
occur; e.g., for $s_3$ $0010011$ would mean that the bottom elements of
$s_3$ are in the $3$rd and $6$th positions, and we can assume that the
rightmost element is always $1$ by definition. The meaning
of $v(non s_1)$ should be clear: this is a binary vector recording positions of elements which are not bottom elements in occurrences of $s_1$. \end{remark}

\subsection{On the pattern $t_1$}\label{sub-t1}

In this subsection, we discuss the minimum and the maximum number of occurrences of the pattern $t_1$ on 132-avoiding permutations. Suppose $\pi_i<\pi_j$ for $i<j$ in a permutation $\pi=\pi_1\pi_2\cdots\pi_n$. By the {\em box between $\pi_i$ and $\pi_j$} we mean the rectangle in the permutation matrix of $\pi$ defined by $i$, $j$, $\pi_i$ and $\pi_j$.

It is easy to see, and first was observed in \cite[Theorem 4]{AKV}, that on 132-avoiding permutations, avoidance of $t_1$ is equivalent to avoidance of the classical pattern 123. Indeed, if we avoid 123 we clearly avoid $t_1$. On the other hand, if we contain 123, then there must exist an occurrence of 123 which is also an occurrence of  $t_1$. Indeed, for any occurrence $xyz$ of 123, if it is not an occurrence of $t_1$, select the smallest element greater than $y$ positioned strictly to the right of $y$ but to the left of $z$ (possibly including $z$), and the largest element less than $y$ positioned strictly to the left of $y$ but to the right of $x$ (possibly including $x$).  These two chosen elements together with $y$ will form an occurrence of $t_1$. Thus, simultaneous avoidance of 132 and $t_1$ is equivalent to simultaneous avoidance of 132 and 123, which is known to give cardinalities $2^{n-1}$ for $n$-permutations.

In what follows, we will explain our observation that the number of 132-avoiding permutations with the maximum number of occurrences of $t_1$ is given by the Catalan numbers.

\begin{lemma}\label{132lemma1} In a $132$-avoiding $n$-permutation, each element is the bottom element of at most one occurrence or $t_1$ and thus taking into account that $n$ and $n-1$ cannot be bottom elements of such occurrences, the maximum number of occurrences of $t_1$ is no more than $n-2$ (which is attainable by, e.g. $12\cdots n$).\end{lemma}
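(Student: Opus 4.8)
The plan is to reduce everything to a single uniqueness statement: once the bottom (role-$1$) element of a would-be occurrence of $t_1$ is fixed, the whole occurrence is forced. Recall that an occurrence of $t_1$ is a subsequence $\pi_a\pi_b\pi_c$ with $a<b<c$ and $\pi_a<\pi_b<\pi_c$ for which the box between $\pi_a$ and $\pi_b$ and the box between $\pi_b$ and $\pi_c$ are both empty; these are exactly the shaded regions of $t_1$, and they are precisely what the selection procedure in the preceding paragraph isolates. I would first record that both emptiness conditions are \emph{necessary} for an occurrence, so they may be used freely to rule out multiplicities.

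First I would prove that the role-$2$ element is determined by the role-$1$ element. Fix the bottom element $x=\pi_a$ and suppose $y$ and $y'$ are both admissible middle elements, each completing to a genuine occurrence of $t_1$, with $x<y<y'$. If $y$ lies to the left of $y'$, then $y$ has position strictly between $a$ and the position of $y'$ and value strictly between $x$ and $y'$, so $y$ sits inside the (empty) box between $x$ and $y'$ --- a contradiction. If instead $y'$ lies to the left of $y$, then reading $x$, $y'$, $y$ from left to right gives values in the order small, large, middle, i.e. an occurrence of the classical pattern $132$, contradicting $132$-avoidance. Hence the middle element is unique. An identical argument (box-emptiness versus a forbidden $132$) shows that, once $x$ and its middle element $y$ are fixed, the top (role-$3$) element is unique as well. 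This would establish that each element is the bottom element of at most one occurrence of $t_1$, which is the heart of the lemma.

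With uniqueness in hand the counting is immediate. The number of occurrences of $t_1$ equals the number of elements that serve as a bottom element, since each such element accounts for exactly one occurrence. The value $n$ can never play the role of $1$ (it is the largest), and $n-1$ cannot either, because a bottom element needs two strictly larger elements to its right while only $n$ exceeds $n-1$. Thus at most $n-2$ elements can be bottom elements, giving at most $n-2$ occurrences. Finally, I would exhibit the bound as tight on the identity $12\cdots n$: for each $i$ with $1\le i\le n-2$ the consecutive triple $i\,(i+1)\,(i+2)$ is an occurrence of $t_1$ (the two relevant boxes are empty since the entries are consecutive in both position and value), producing $n-2$ distinct occurrences, and the identity is trivially $132$-avoiding.

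The only genuinely delicate point is the uniqueness of the middle (and then top) element; everything else is bookkeeping. The crux is that the two candidate orderings for a second middle element are each excluded, but for \emph{different} reasons --- one by the shading (box-emptiness) built into $t_1$, the other by the ambient hypothesis of $132$-avoidance --- so the argument genuinely needs both features of the setting at once.
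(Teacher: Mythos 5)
Your proof is correct and follows essentially the same route as the paper's: you fix the bottom element and show the middle and then the top element are forced, ruling out one positional ordering via the empty-box (shading) condition and the other via $132$-avoidance, exactly as in the paper's case analysis (merely organized by value order rather than position order). The counting and the attainability by $12\cdots n$ are handled the same way the paper treats them, so there is nothing to add.
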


\begin{proof} Let $xyz$ be an occurrence of $t_1$, where $x$ is the bottom element. Suppose $xy'z'$ is another occurrence of $t_1$. If $y'$ is to the left of $y$ then $y'$ must be larger than $y$ (because $xyz$ is an occurrence of $t_1$ and $y'$ must be outside of the box between $x$ and $y$), but then $xy'y$ is an occurrence of the pattern $132$, which is a contradiction. On the other hand, if $y'$ is to the right of $y$, then either $y$ will be in the prohibited area between $x$ and $y'$ (if $y'>y$), or $xyy'$ is an occurrence of the pattern $132$ which is also a contradiction. Thus $y=y'$.

Similarly, if $z'$ is to the left of $z$, then $z'$ must be larger than $z$ (not to be in the box between $y$ and $z$), and we get that $yz'z$ is an occurrence of the pattern $132$. On the other hand, if $z'$ is to the right of $z$, then either $z$ is in the prohibited area between $z'$ and $y$, or $yzz'$ is an occurrence of the pattern $132$.  Thus $z=z'$, and $xyz$ is the only occurrence of $t_1$ having $x$ as bottom element.\end{proof}

\begin{lemma}\label{132lemma2} In a $132$-avoiding $n$-permutation with maximum number of occurrences of $t_1$, $n$ must be the rightmost element and it must be preceded by $n-1$. \end{lemma}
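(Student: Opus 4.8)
The plan is to read off both conclusions directly from the counting bound established in Lemma~\ref{132lemma1} together with the standard decomposition of a 132-avoiding permutation. By Lemma~\ref{132lemma1} each element is the bottom element of at most one occurrence of $t_1$, and neither $n$ nor $n-1$ can be a bottom element; since every occurrence has a unique bottom element, the number of occurrences of $t_1$ equals the number of distinct bottom elements. Hence a permutation attaining the maximum of $n-2$ occurrences must have every one of the values $1,2,\ldots,n-2$ serving as the bottom element of exactly one occurrence of $t_1$.

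First I would locate the two rightmost positions. The last element $\pi_n$ cannot be a bottom element of any occurrence of $t_1$, since such an element requires two larger elements to its right; similarly $\pi_{n-1}$ cannot be a bottom element, as only a single element lies to its right whereas $t_1$ needs two. Combined with the observation of the previous paragraph---that in a maximal permutation the values $1,\ldots,n-2$ are all forced to be bottom elements---this pins the values in the last two positions down to $\{\pi_{n-1},\pi_n\}=\{n-1,n\}$.

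It then remains only to fix the order of these two values, and this is the one place where 132-avoidance does the work. I would rule out the configuration $\pi_{n-1}=n$, $\pi_n=n-1$ as follows: if $n$ sat in position $n-1$, then writing $\pi=\pi_1\,n\,\pi_2$ with $\pi_2$ equal to the single element $n-1$, the well-known fact that every element of $\pi_1$ exceeds every element of $\pi_2$ would force each of the $n-2\ge 1$ elements of $\pi_1$ to exceed $n-1$, which is impossible. Therefore $\pi_{n-1}=n-1$ and $\pi_n=n$, so $n$ is rightmost and is preceded by $n-1$, as claimed.

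The argument carries no real difficulty beyond bookkeeping: the counting identity from Lemma~\ref{132lemma1} immediately restricts the last two positions to the values $n-1$ and $n$, and the only genuinely pattern-theoretic step is using 132-avoidance to eliminate the order $n(n-1)$. (For $n=2$ the maximum is the trivial value $0$, attained by both $12$ and $21$, so the statement is understood for $n\ge 3$.)
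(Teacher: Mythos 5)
Your proof is correct, and it takes a genuinely different route from the paper's. The paper argues structurally through right-to-left maxima: since a right-to-left maximum can never be the bottom element of an occurrence of $t_1$, an optimal permutation has at most two of them; then, writing $\pi=\pi_1 n\pi_2$, it shows that if $n$ is not the last element, the maximum of $\pi_1$ or the maximum element between $n$ and the final element is a third non-bottom element, capping the count at $n-3$; finally, with $\pi=\pi'n$, any right-to-left maximum of $\pi'$ also fails to be a bottom element (only $n$ lies above it to its right), so $\pi'$ has exactly one right-to-left maximum, i.e.\ it ends in $n-1$. You instead run a pure value/position count: Lemma~\ref{132lemma1} puts occurrences in bijection with their bottom elements and excludes the values $n-1$ and $n$, so a permutation attaining the maximum $n-2$ must have every value in $\{1,\dots,n-2\}$ as a bottom element; since the last two positions cannot host bottom elements (a bottom element needs two larger elements to its right), those positions must carry the values $n-1$ and $n$; a single application of 132-avoidance, via the $\pi_1 n\pi_2$ decomposition, then rules out the order $n,n-1$. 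Your argument is shorter, avoids the case analysis entirely, and isolates the one place where 132-avoidance is needed beyond what Lemma~\ref{132lemma1} already provides; the paper's route, by contrast, stays inside the right-to-left-maxima framework that it immediately reuses in Lemma~\ref{132lemma3} and throughout Section~\ref{sec-132-av}. You also correctly flag the degenerate case $n=2$ (where $21$ also attains the maximum of zero occurrences), a caveat the paper passes over silently.
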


\begin{proof} Clearly, a right-to-left maximum cannot be the bottom element of an occurrence of $t_1$, and thus, by Lemma \ref{132lemma1}, an optimal permutation has at most two right-to-left maxima.

Any 132-avoiding permutation $\pi$ has the structure $\pi_1n\pi_2$, where each element of $\pi_1$, if any, is larger than every element of $\pi_2$, if any. Assume $\pi$ has two right-to-left maxima, which makes $\pi_2$ non-empty.  One of the two must be $n$ and the other we will call $x$ (the rightmost element of $\pi$). We see that the maximum element to the left of $n$ and the maximum element between $n$ and $x$ (at least one of them exists assuming the length of $\pi$ is at most 3), cannot be the bottom element of an occurrence of $t_1$. Thus, $\pi$ has at most $n-3$ occurrences of $t_1$ and cannot be optimal.  This tells us that $n$ must be the rightmost element (the single right-to-left maximum). Therefore, we can assume $\pi=\pi'n$. However, a right-to-left maximum in $\pi'$ cannot be the bottom element of an occurrence of $t_1$, and thus, having more than one right-to-left maxima in $\pi'$ would mean having at most $n-3$ occurrences of $t_1$ in $\pi$. Thus, we must have $\pi=\pi''(n-1)n$ for some permutation $\pi''$ \end{proof}

\begin{lemma}\label{132lemma3} Suppose $\pi=\pi''(n-1)n$ is a $132$-avoiding permutation. Then the number of occurrences of $t_1$ in $\pi$ is $(n-2)$, and thus, by Lemma {\rm\ref{132lemma1}}, $\pi$ has the maximum possible number of occurrences of $t_1$. \end{lemma}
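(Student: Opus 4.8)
The plan is to combine the upper bound already furnished by Lemma~\ref{132lemma1} with an explicit construction showing that every element small enough to be a bottom element actually is one. Since $\pi=\pi''(n-1)n$, the block $\pi''$ consists of exactly the $n-2$ values $\{1,2,\dots,n-2\}$, and by Lemma~\ref{132lemma1} each of these is the bottom element of at most one occurrence of $t_1$, while $n$ and $n-1$ are the bottom element of none. Hence the total number of occurrences equals the number of elements of $\pi''$ that actually serve as a bottom element, and it suffices to prove that each element of $\pi''$ does.

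First I would fix an element $x$ of $\pi''$, say in position $a$, and build a canonical occurrence of $t_1$ with $x$ at the bottom as follows. Let $y$ be the smallest-valued element that is simultaneously larger than $x$ and located to the right of $x$, and let $z$ be the smallest-valued element that is simultaneously larger than $y$ and located to the right of $y$. The existence of both is exactly where the hypothesis $\pi=\pi''(n-1)n$ enters: the trailing pair $(n-1)n$ guarantees that $x$ has some larger element to its right, so $y$ exists and in fact $y\leq n-1$; since then $y<n$ and $n$ is the rightmost element, $z$ exists as well. By construction $x<y<z$ in value and their positions strictly increase, so $xyz$ is an occurrence of the classical pattern $123$.

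Next I would verify the two box conditions that, under the characterization used in the proof of Lemma~\ref{132lemma1}, promote this $123$ occurrence to an occurrence of $t_1$: the box between $x$ and $y$ and the box between $y$ and $z$ must both be empty. Each follows immediately from the minimality in the choice of $y$ and $z$. Any element lying in the box between $x$ and $y$ would be larger than $x$, to the right of $x$, and strictly smaller than $y$, contradicting the minimality of $y$; symmetrically, any element in the box between $y$ and $z$ would contradict the minimality of $z$. (Notably, these two arguments are purely order-theoretic and do not even require $132$-avoidance.) This exhibits, for each $x\in\pi''$, an occurrence of $t_1$ with bottom element $x$.

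Finally, combining this construction with the uniqueness half of Lemma~\ref{132lemma1} shows that each of the $n-2$ elements of $\pi''$ is the bottom element of exactly one occurrence of $t_1$, so the total count is precisely $n-2$, which by Lemma~\ref{132lemma1} is the maximum possible. I expect no serious obstacle here; the only points demanding care are confirming the existence of $y$ and $z$, which is precisely where the special form $\pi=\pi''(n-1)n$ is indispensable, and confirming that the two emptied boxes are indeed the only mesh constraints of $t_1$, so that an empty-box $123$ occurrence genuinely is a $t_1$ occurrence, as is implicit in the refinement procedure and in the proof of Lemma~\ref{132lemma1}.
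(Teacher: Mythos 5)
Your proposal is correct and takes essentially the same route as the paper: the paper likewise fixes $x$ in $\pi''$ and builds a canonical occurrence of $t_1$ with bottom element $x$ by minimal choices (staged as a refinement $x(n-1)n \to xy(n-1) \to xyz$ through two auxiliary mesh patterns, where $y$ and $z$ are chosen as smallest elements in the relevant boxes), then invokes Lemma~\ref{132lemma1} for the ``at most one occurrence per bottom element'' half of the count. Your one-shot greedy choice of $y$ and $z$ produces exactly the occurrence that the paper's staged refinement terminates at, so the two arguments coincide in substance.
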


\begin{proof} Let $x$ be an element of $\pi''$. Our goal is to show that $x$ is the bottom element of an occurrence of $t_1$ in $\pi$. Indeed, $x(n-1)n$ is an occurrence of the mesh pattern $r_1$ shown in Figure \ref{aux1}.
If there are no elements in the box defined by $x$ and $(n-1)$, we are done. Otherwise, let $y$ be the smallest element in that box. Clearly $xy(n-1)$ is an occurrence of the mesh pattern $r_2$ shown in Figure \ref{aux1}.
If this is actually an occurrence of $t_1$, we are done. Otherwise, let $z$ be the minimum element in the box defined by $y$ and $(n-1)$. Then $xyz$ is an occurrence of $t_1$.\end{proof}

\begin{figure}[ht]
\begin{center}
\includegraphics[scale=0.4]{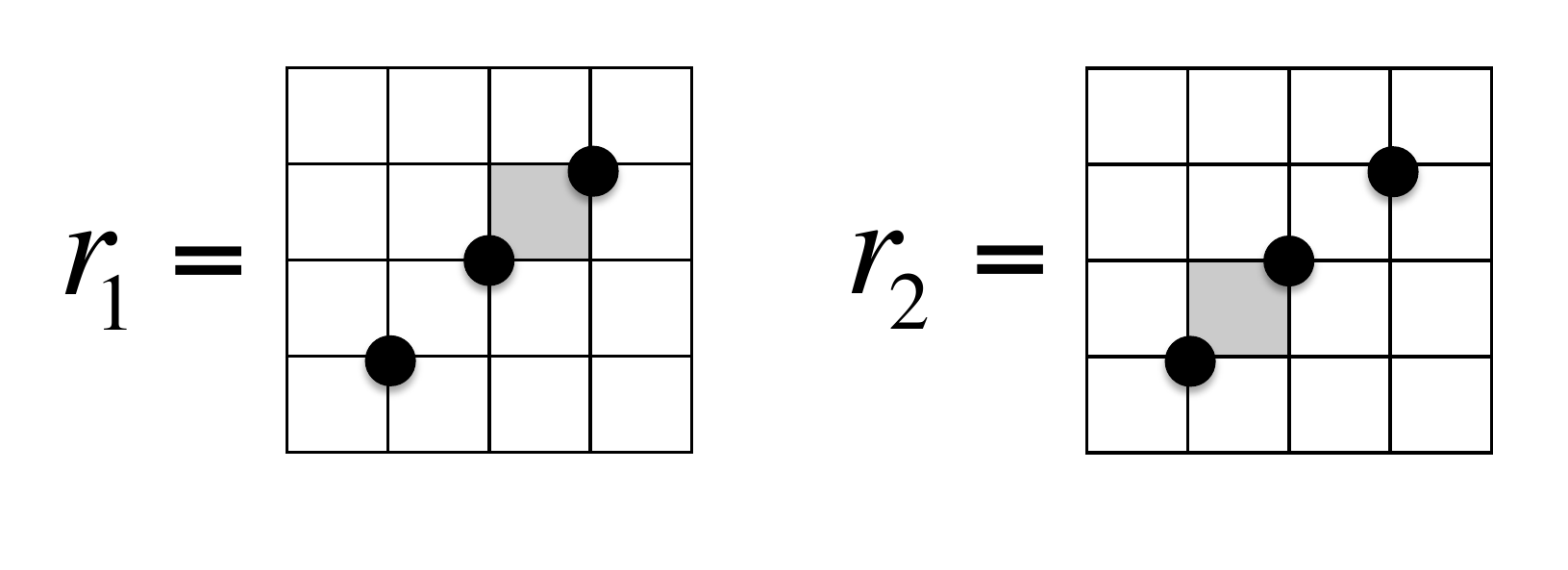}
\end{center}
\vspace{-20pt}
\caption{Two auxiliary mesh patterns.}\label{aux1}
\end{figure}

\begin{theorem} \label{maxt1} There are $C_{n-2}$ $132$-avoiding $n$-permutations that contain the maximum number of occurrences of $t_1$.\end{theorem}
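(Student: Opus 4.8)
The plan is to combine the three preceding lemmas into an exact characterization of the extremal permutations, and then to count them directly. By Lemma \ref{132lemma1} the maximum possible number of occurrences of $t_1$ in a $132$-avoiding $n$-permutation is $n-2$, and by Lemma \ref{132lemma3} this bound is in fact achieved by every $132$-avoiding permutation of the form $\pi''(n-1)n$. Conversely, Lemma \ref{132lemma2} guarantees that every extremal permutation must end with $(n-1)n$. Hence the set of $132$-avoiding $n$-permutations attaining the maximum number of occurrences of $t_1$ is precisely the set of $132$-avoiding permutations of the shape $\pi''(n-1)n$, and the whole problem reduces to counting these.

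The second step is to show that the assignment $\pi''\mapsto\pi''(n-1)n$ is a bijection between $132$-avoiding permutations of $\{1,2,\ldots,n-2\}$ and the extremal permutations. The map is clearly injective and, by the discussion above, its image is contained in the extremal set, so the only thing left to verify is that $\pi=\pi''(n-1)n$ is $132$-avoiding if and only if $\pi''$ is. The ``only if'' direction is immediate, since $\pi''$ is a subword of $\pi$. For the ``if'' direction I would argue that neither $n-1$ nor $n$ can take part in an occurrence of $132$: the element $n$ is the global maximum placed in the last position, so it can fill none of the three roles of a $132$ pattern; and $n-1$, being the second largest element with only $n$ to its right, cannot be the large middle value playing the role of $3$, cannot be the small value playing the role of $1$, and cannot be the value playing the role of $2$ either, because any element that could precede it in the role of $3$ lies in $\pi''$ and is therefore smaller than $n-1$. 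Consequently any occurrence of $132$ in $\pi$ would have to lie entirely within $\pi''$, which establishes the equivalence.

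Finally, since there are exactly $C_{n-2}$ $132$-avoiding permutations of an $(n-2)$-element set, the bijection yields exactly $C_{n-2}$ extremal permutations, which is the assertion of the theorem.

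I expect the only genuinely delicate point to be the short case analysis confirming that $n-1$ cannot serve as the middle value (the role of $2$) in a $132$ pattern; this is precisely where one must invoke that every element appearing to the left of $n-1$ in such a hypothetical pattern comes from $\pi''$ and is thus bounded above by $n-2$. Everything else is bookkeeping resting on the three lemmas already proved, so the argument should be brief.
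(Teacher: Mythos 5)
Your proposal is correct and follows essentially the same route as the paper: Lemma \ref{132lemma2} gives necessity of the shape $\pi''(n-1)n$, Lemma \ref{132lemma3} gives sufficiency, and the count is $C_{n-2}$ via the bijection $\pi''\mapsto\pi''(n-1)n$. The only difference is that you spell out the (easy) verification that appending $(n-1)n$ to a $132$-avoiding $\pi''$ preserves $132$-avoidance, a point the paper leaves implicit in calling $\pi''$ ``an arbitrary $132$-avoiding permutation.''
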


\begin{proof} The result follows from Lemmas \ref{132lemma2} and \ref{132lemma3} since the permutation $\pi''$ in Lemma \ref{132lemma3} is an arbitrary $132$-avoiding permutation known to be counted by $C_{n-2}$.\end{proof}

\subsection{On the pattern $t_2$}\label{sub-t2}

In this subsection, we will prove a number of observations regarding the distribution of $t_2$ on $132$-avoiding permutations.

\begin{lemma}\label{q132lemma1} If a $132$-avoiding $n$-permutation $\pi$ contains at least one occurrence of $t_2$, then it ends with $n$ and contains an occurrence of $t_2$ that involves $n$. \end{lemma}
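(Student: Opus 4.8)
The plan is to argue directly from the recursive structure of $132$-avoiding permutations, $\pi=\pi_1 n\pi_2$ (where every entry of $\pi_1$ exceeds every entry of $\pi_2$), after first reading off from the diagram of $t_2$ the two features of its shading that the argument needs. The first feature I would extract is that the shaded cells sitting to the right of the top point force the top element of \emph{any} occurrence of $t_2$ to be the rightmost entry of $\pi$; equivalently, nothing may appear after the top element. The second feature is that the shaded cells above and to the left of the bottom point force the bottom element of any occurrence to be a left-to-right maximum of $\pi$, i.e.\ no larger entry precedes it. Both are exactly the kind of ``extremal position'' consequence already exploited for the border pattern $p$ earlier in the paper, so the first task is simply to verify them from Figure~\ref{allPatterns}.

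With these two facts in hand the lemma falls out quickly. Let $xyz$ be an occurrence of $t_2$, so that $x<y<z$ as values and $x,y,z$ occur in this order. Because $x$ must be a left-to-right maximum, it cannot lie in $\pi_2$: every entry of $\pi_2$ is preceded by all of $\pi_1$ together with $n$, each of which is larger. Hence $x\in\pi_1\cup\{n\}$, and since $x$ is the smallest of the three we have $x\neq n$, so $x\in\pi_1$. On the other hand $z$ must be the last entry of $\pi$. If $\pi_2$ were nonempty, that last entry would lie in $\pi_2$ and would therefore be smaller than $x\in\pi_1$, contradicting $x<z$. Thus $\pi_2$ is empty, the last entry of $\pi$ is $n$, and $z=n$. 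This simultaneously yields both assertions: $\pi$ ends with $n$, and the occurrence $xyz=xyn$ we started from already involves $n$.

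The genuinely delicate step is the first one: correctly interpreting the shading of $t_2$ so that the two extremal consequences above are justified, since everything afterward is a short deduction from the $\pi_1 n\pi_2$ decomposition. In particular I would take care to confirm that the shading to the right of the top point really forbids \emph{all} later entries (rather than only larger ones, which would merely make the top element a right-to-left maximum and would not force $n$ to the end), and that the shading at the bottom point really rules out an occurrence living entirely inside $\pi_2$. If instead the diagram only guarantees that the top element is a right-to-left maximum, the fallback is an induction on $n$: an occurrence cannot sit inside $\pi_2$ by the bottom-point shading, an occurrence inside $\pi_1$ can be lifted by replacing its top element with $n$, and an occurrence already using $n$ on top forces $\pi_2=\emptyset$. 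I would resort to this only if the direct reading of the figure does not supply the stronger statement that the top element is the last entry.
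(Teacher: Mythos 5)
Your proposal rests on a misreading of the pattern $t_2$, in two ways. First, $t_2$ has length two, not three: everywhere in the paper its occurrences are \emph{pairs} of elements (Lemma \ref{q132lemma2} speaks of ``$xn$ is an occurrence of $t_2$,'' and the proof of Theorem \ref{q132proposition3} of the occurrences ``$(n-1)n$ and $xn$''), so an argument about triples $xyz$ is about a different object. Second, and fatally, the ``feature 1'' that carries your whole deduction is false: the shading of $t_2$ does \emph{not} forbid every entry to the right of the top point; the cell to the right of and above the top point is unshaded. This is forced by the paper's other results: the proof of Theorem \ref{q132proposition1} exhibits an occurrence $wx$ of $t_2$ whose top element $x$ is not the last entry of the permutation, and Theorem \ref{q132proposition3} counts $123$ as having exactly two occurrences ($12$ and $23$), whereas under your reading $123$ would have exactly one (namely $23$), contradicting Theorem \ref{q132proposition1}. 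Your ``feature 2'' fails as well: in $213$ the pair $13$ is an occurrence of $t_2$, yet its bottom element $1$ is not a left-to-right maximum. Since your main argument ($z$ is the last entry, hence $\pi_2=\emptyset$, hence $z=n$) hangs entirely on feature 1, it collapses.

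Your hedged fallback is closer to the actual proof but still contains a broken step. The correct first half (the paper's) is: if $\pi=\pi_1 n\pi_2$ with $\pi_2\neq\emptyset$, then any occurrence inside $\pi_1 n$ is destroyed by an element of $\pi_2$ (which sits in the shaded cell below and to the right of both points), any occurrence inside $\pi_2$ is destroyed by $n$ (which sits in the shaded cell above and to the left), and no occurrence can straddle $\pi_1$ and $\pi_2$ since every element of $\pi_1$ exceeds every element of $\pi_2$; hence $\pi$ would avoid $t_2$, a contradiction, so $\pi$ ends with $n$. But your lifting step --- ``an occurrence inside $\pi_1$ can be lifted by replacing its top element with $n$'' --- does not work: if $xy$ is an occurrence with both elements in $\pi_1$, then $y$ itself lies in the box between $x$ and $n$, which is shaded in $t_2$, so $xn$ is \emph{not} an occurrence. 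The repair, which is the paper's second half, is to consider the box between the top element $y$ and $n$: if it is empty, $yn$ is the desired occurrence; otherwise the topmost element of that box paired with $n$ is.
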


\begin{proof}  Suppose that $n$ is not at the end of $\pi$ and $\pi=\pi_1n\pi_2$ with $\pi_2$ non-empty. Clearly, because of an element in $\pi_2$, $t_2$ cannot occur in $\pi_1n$, and because of $n$, $t_2$ cannot occur in $\pi_2$.  No other occurrence of $t_2$ could exist in $\pi$ because each element in $\pi_1n$ is larger than every element in $\pi_2$, which is a contradiction.  Thus $\pi$ must end with $n$.

Now, suppose that $xy$ is an occurrence of $t_2$ in a permutation $\pi$ and $y\neq n$.  Assume that there is at least one element in the box defined by $y$ and $n$.  If not, $yn$ is the desired occurrence of $t_2$. Otherwise, one can take the topmost element in that box together with $n$ to get the desired occurrence of $t_2$.  This completes the proof. \end{proof}

\begin{remark} Note that in the box defined by $y$ and $n$ in Lemma {\em \ref{q132lemma1}}, all elements, if any, must be in increasing order to avoid the pattern $132$.\end{remark}

\begin{theorem}\label{q132proposition1}  For $n\geq 3$, there are no $132$-avoiding permutations with exactly one occurrence of the pattern $t_2$.\end{theorem}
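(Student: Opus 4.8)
The plan is to show that every $132$-avoiding $n$-permutation (with $n \ge 3$) that has at least one occurrence of $t_2$ in fact has at least two; since the number of occurrences is then never equal to $1$, the theorem follows at once. The first move is to apply Lemma~\ref{q132lemma1}: any such $\pi$ must end in $n$, so I write $\pi = \pi' n$, where $\pi'$ is a $132$-avoiding permutation of $\{1,\dots,n-1\}$.

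The core observation I would establish is a clean description of the occurrences of $t_2$ whose top element is the terminal $n$: the pair $(\pi_a, n)$ is an occurrence of $t_2$ exactly when $\pi_a$ is a right-to-left maximum of $\pi'$. The direction I actually need is that a right-to-left maximum of $\pi'$ yields an occurrence, and this reduces to checking that the shaded regions of $t_2$ are empty for such a pair. The box strictly between $\pi_a$ and $n$ is empty precisely because $\pi_a$ exceeds every entry to its right inside $\pi'$; the lower-right shaded region is empty because nothing lies to the right of the terminal $n$; and the upper-left shaded region is empty because no value exceeds $n$. With this in hand, note that $\pi'$ always has at least one right-to-left maximum, and if it has two or more, then the observation immediately produces two or more occurrences of $t_2$ (all sharing top element $n$), which settles that case.

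The remaining case, which I expect to be the main obstacle, is when $\pi'$ has exactly one right-to-left maximum, equivalently when its largest entry $n-1$ sits in its last position, so that $\pi = \pi''(n-1)n$. Here all top-$n$ occurrences collapse to the single pair $(n-1,n)$, and a second occurrence must be located strictly to the left of $n$. This is exactly where the hypothesis $n \ge 3$ enters: it guarantees that $\pi''$ is non-empty, so its last entry $w = \pi_{n-2}$ exists. I would then verify that $(w, n-1)$ is a genuine occurrence of $t_2$: the two entries are adjacent, so the box between them is vacuously empty, and the only entry lying to their right is $n$, which falls into neither shaded corner region because $n > n-1$. This occurrence is distinct from $(n-1,n)$, supplying the required second occurrence. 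Assembling the cases shows that a single occurrence always forces at least two, so for $n \ge 3$ no $132$-avoiding permutation has exactly one occurrence of $t_2$, as claimed.
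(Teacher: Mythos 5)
Your proof is correct, but it takes a noticeably different route from the paper's. Both arguments begin the same way, invoking Lemma~\ref{q132lemma1} to write $\pi=\pi'n$, and both then manufacture a second occurrence of $t_2$; the difference is in how that second occurrence is found. The paper argues by contradiction around the bottom element $x$ of the assumed unique occurrence $xn$: it splits $\pi'$ into the four quadrants determined by $x$, notes quadrant I is empty because $xn$ is an occurrence, and shows that a non-empty quadrant II or IV yields a second occurrence with top element $n$ (its largest element paired with $n$), while a non-empty quadrant III yields one with top element $x$ (its topmost element paired with $x$); hence all quadrants are empty and $n\le 2$. You instead case on the number of right-to-left maxima of $\pi'$, using the fact that each right-to-left maximum of $\pi'$ pairs with the terminal $n$ to form an occurrence of $t_2$ --- this is precisely (the easy direction of) Lemma~\ref{q132lemma2}, which the paper states only \emph{after} this theorem and deploys in Theorems~\ref{q132proposition3} and~\ref{q132proposition4}. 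Your only nontrivial case, a single right-to-left maximum forcing $\pi=\pi''(n-1)n$ with the adjacent pair $(w,n-1)$ as the second occurrence, plays the role of the paper's quadrant~III step (the paper pairs the \emph{largest} entry of $\pi''$ with $n-1$, you pair the \emph{rightmost} entry; both work). What your organization buys is a direct, non-contradiction proof that meshes with the right-to-left-maximum machinery the paper needs later anyway; what the paper's buys is a self-contained argument at that point in the text together with the stronger local conclusion that \emph{all four} quadrants around $x$ are empty. One tiny point of care: when verifying that $(w,n-1)$ is an occurrence you only discuss the entry $n$ to its right; you should also note that the upper-left shaded region is empty because the only value exceeding $n-1$ is $n$, which lies to the right --- trivial here, but worth saying.
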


\begin{proof}  Suppose there exists a $132$-avoiding permutation $\pi$ of length $n$ that has exactly one occurrence of $t_2$. By Lemma \ref{q132lemma1}, $\pi$ ends with $n$ and the only occurrence of $t_2$ in $\pi$ involves $n$.  Suppose that $\pi=\pi_1n$ and the single occurrence of $t_2$ is $xn$ for some $x$ in $\pi_1$. In the permutation matrix of $\pi$, the element $x$ subdivides $\pi_1$ into four quadrants (using usual way to label quadrants in counterclockwise direction), where quadrant I is empty.  If quadrant II is not empty, then its largest element together with $n$ would be a different occurrence of $t_2$. Similarly, if quadrant IV is not empty, then its largest element together with $n$ would also be a different occurrence of $t_2$. Lastly, if quadrant III is not empty, then its topmost element together with $x$ would be another occurrence of $t_2$. Thus all four quadrants are empty, which contradicts the assumption that $n\geq 3$. Therefore, no such $\pi$ can exist.\end{proof}

As a direct corollary to the proof of Theorem \ref{q132proposition1}, we have the following theorem.

\begin{theorem}\label{q132proposition2}  The number of $132$-avoiding $n$-permutations that avoid $t_2$ is given by $C_n-C_{n-1}$. \end{theorem}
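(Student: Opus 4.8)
The plan is to pin down exactly which $132$-avoiding $n$-permutations contain $t_2$ and then obtain the avoiders by complementation against the total count $C_n$. Concretely, I will show that for $n\ge 2$ a $132$-avoiding $n$-permutation contains an occurrence of $t_2$ if and only if it ends with the letter $n$; since the statement is announced as a corollary of the proof of Theorem~\ref{q132proposition1}, the idea is to reuse the structural facts already extracted there rather than to start afresh.

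The first (easy) direction is handed to us by Lemma~\ref{q132lemma1}: its contrapositive states that a $132$-avoiding permutation which does not end in $n$ cannot contain any occurrence of $t_2$. Hence every such permutation is automatically a $t_2$-avoider, and the only permutations that could possibly contain $t_2$ are those of the form $\pi'n$.

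The substantive direction is the converse: every $132$-avoiding permutation $\pi=\pi'n$ with $\pi'$ nonempty does contain an occurrence of $t_2$. I would prove this by producing one explicitly. Let $x$ be the element occupying the position immediately before $n$. Because $x$ and $n$ sit in adjacent positions, the box strictly between them contains no elements of $\pi$, and this box is exactly the region whose emptiness is demanded of the lower element of a $t_2$ occurrence (this is the content of the ``quadrant~I is empty'' observation in the proof of Theorem~\ref{q132proposition1}, where one verifies that only the box between the two chosen elements, and not the other three quadrants, must be vacated). Since $x<n$, the pair $xn$ is then a bona fide occurrence of $t_2$, so $\pi$ contains $t_2$.

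It remains to count the permutations ending in $n$ and to assemble the pieces. Deleting the terminal $n$ from $\pi'n$ produces a $132$-avoiding $(n-1)$-permutation, and conversely appending $n$ to any $132$-avoiding $(n-1)$-permutation introduces no $132$ pattern, since the appended maximum lands in the last position, where it could only serve as the middle-valued (rightmost) entry of a $132$ pattern, which is impossible for the largest letter. This yields a bijection, so there are exactly $C_{n-1}$ permutations ending in $n$. Combining the two directions, the $132$-avoiding $n$-permutations containing $t_2$ are precisely these $C_{n-1}$ permutations, whence the number avoiding $t_2$ is $C_n-C_{n-1}$. The one point requiring care is the converse step, and the main obstacle there is purely to justify that no shaded region of $t_2$ lying outside the box-between can block the pair $xn$; fortunately this is exactly what the quadrant analysis in the proof of Theorem~\ref{q132proposition1} already guarantees, so no new casework is needed. (For completeness one notes the identity is intended for $n\ge 2$; the single permutation of length one trivially avoids $t_2$.)
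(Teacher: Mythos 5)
Your proof is correct and takes essentially the same approach as the paper: use Lemma \ref{q132lemma1} to show that containing $t_2$ forces the permutation to end in $n$, observe that the $132$-avoiding $n$-permutations ending in $n$ number exactly $C_{n-1}$, and conclude that the avoiders number $C_n-C_{n-1}$. If anything, you are more careful than the paper's own terse proof, which leaves the converse direction (that for $n\geq 2$ every $132$-avoiding permutation ending in $n$ does contain $t_2$) implicit; your explicit witness $xn$ with $x$ immediately preceding $n$ supplies it, and could be obtained even more directly from Lemma \ref{q132lemma2}, since the last element of $\pi'$ is trivially a right-to-left maximum of $\pi'$.
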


\begin{proof} The number of $n$-permutations avoiding the pattern $132$ is $C_n$. Again, by Lemma \ref{q132lemma1}, if such a permutation contains $t_2$ then $n$ must be the rightmost element, and therefore could not be part of a $132$ pattern in the permutation.  Thus, the number of 132-avoiding $n$-permutations with $n$ at the end is given by $C_{n-1}$.  \end{proof}

A {\em Dyck path} of length $2n$ is a lattice path from $(0,0)$ to $(2n,0)$ with steps $U=(1,1)$ and $D=(1,-1)$ that never goes below the $x$-axis. In the {\em standard bijection} between $132$-avoiding permutations and Dyck paths, the position of the largest element corresponds to the leftmost return of a path to the $x$-axis (see \cite{kit} for details). Using Theorem \ref{q132proposition2}, we can easily map bijectively such permutations to, e.g., Dyck paths that do not start with $UU$. Indeed, Dyck paths that begin with $UD$ can be mapped to 132-avoiding permutations ending with the largest element through the standard bijection composed with applying reverse to Dyck paths. The same map will then send (132,$t_2$)-avoiding permutations to Dyck paths beginning with $UU$.

The following lemma is straightforward to prove.

\begin{lemma}\label{q132lemma2}  Let $\pi=\pi'n$ be a $132$-avoiding $n$-permutation. We have that $x$ is a right-to-left maximum in $\pi'$ if and only if $xn$ is an occurrence of $t_2$. \end{lemma}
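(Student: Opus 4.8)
The plan is to reduce the statement to a direct translation between the definition of an occurrence of $t_2$ and the definition of a right-to-left maximum, exploiting the fact that the top element of the occurrences we care about is the final, maximal entry $n$.

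First I would pin down exactly what it means for a pair of entries to be an occurrence of $t_2$. Reading the shading of $t_2$ off Figure~\ref{allPatterns}, and as is already used implicitly in the proof of Theorem~\ref{q132proposition1} (where the mere fact that $xn$ is an occurrence of $t_2$ forces the quadrant up and to the right of $x$ to be empty), an ascending pair $ab$ with $a<b$ and $a$ to the left of $b$ is an occurrence of $t_2$ if and only if no entry of $\pi$ has position strictly between those of $a$ and $b$ and, simultaneously, value strictly between $a$ and $b$; that is, the interior of the box determined by $a$ and $b$ is empty.

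Next I would specialize to $b=n$. Since $\pi=\pi'n$ ends in $n$ and $n$ is the largest entry, for every $x$ in $\pi'$ the pair $xn$ is automatically ascending with $x$ to the left of $n$, and all cells of the $t_2$ diagram lying above or to the right of $n$ are vacuous; hence the only condition left to check is emptiness of the box between $x$ and $n$. Because every entry other than $n$ is smaller than $n$, this box is empty precisely when no entry lying to the right of $x$ in $\pi'$ exceeds $x$. This last condition is word for word the statement that $x$ is a right-to-left maximum of $\pi'$, so the two characterizations coincide and both implications follow at once.

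The only genuinely delicate step is the first one: correctly identifying which shaded cell of $t_2$ is relevant to the argument. The point that makes this painless is that the top element of the occurrence is the last and maximal entry $n$, so every shaded cell of $t_2$ except the central one is automatically empty; in particular no appeal to $132$-avoidance is actually needed here, and it is the special position of $n$ rather than the avoidance hypothesis that drives the proof. I would flag this explicitly so the reader sees why the lemma is indeed, as claimed, straightforward.
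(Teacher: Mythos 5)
Your overall strategy---specialize the shading of $t_2$ to pairs whose top element is the final, maximal entry $n$, so that everything reduces to emptiness of the box between $x$ and $n$, which is verbatim the condition that $x$ is a right-to-left maximum of $\pi'$---is exactly the ``straightforward'' argument the paper has in mind (the paper prints no proof at all), and your closing observations (that $132$-avoidance is never actually used, and that it is the special position of $n$ that drives the proof) are correct. However, your first paragraph contains a genuine error: it is \emph{not} true that for a general ascending pair $ab$ an occurrence of $t_2$ is equivalent to the box between $a$ and $b$ being empty. That is the definition of the pattern $s_1$, not $t_2$. The pattern $t_2$ carries additional shaded squares beyond the central box: at least the square lying to the left of both entries and above both, and the square lying to the right of both entries and below both. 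This is forced by the paper's own proof of Lemma \ref{q132lemma1}, where an occurrence of $t_2$ inside $\pi_2$ is ruled out ``because of $n$'' (which sits to the left of and above such a pair) and an occurrence inside $\pi_1n$ is ruled out ``because of an element in $\pi_2$'' (which sits to the right of and below such a pair); neither statement would make sense if only the central box were shaded. Indeed, under your reading $t_2$ would coincide with $s_1$, Lemma \ref{q132lemma1} would be false (the permutation $231$ contains the boxed pattern $s_1$ yet does not end with its largest element), and Theorem \ref{q132proposition2} would contradict Theorem \ref{thm:Boxed12IsCatalanTriangle}, since exactly one $132$-avoiding $n$-permutation avoids $s_1$ while $C_n-C_{n-1}$ of them avoid $t_2$. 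Note also that your supporting citation of Theorem \ref{q132proposition1} only shows that the central box \emph{is} shaded (occurrence implies empty box); it says nothing about whether other squares are shaded, so it cannot justify an ``if and only if.''

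Fortunately the error is confined to that opening claim, and the rest of your argument quietly corrects it: in your second and third paragraphs you acknowledge that $t_2$ has other shaded cells and argue that every one of them lies above $n$ in value or to the right of $n$ in position, hence is vacuously empty when the top element of the pair is the last, largest entry. For pairs of the form $xn$ this is exactly right, and then ``occurrence of $t_2$'' is equivalent to ``the box between $x$ and $n$ is empty,'' i.e.\ to ``no entry of $\pi'$ to the right of $x$ exceeds $x$,'' i.e.\ to ``$x$ is a right-to-left maximum of $\pi'$,'' giving both implications at once. To repair the write-up, delete (or restrict to pairs with top element $n$) the general ``if and only if'' of the first paragraph, state the actual shading of $t_2$, and keep the remainder as is.
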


\begin{theorem}\label{q132proposition3} For $n\geq 4$, there are $C_{n-2}$ $132$-avoiding $n$-permutations that contain exactly two occurrences of $t_2$. There are two such permutations of length $3$ and none of smaller lengths.\end{theorem}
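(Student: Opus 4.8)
The plan is to reduce everything to counting right-to-left maxima via the decomposition $\pi=\pi'n$, and then to recognize a Catalan convolution. First I would invoke Lemma \ref{q132lemma1}: any $132$-avoiding permutation with at least one occurrence of $t_2$ ends in $n$, so I may write $\pi=\pi'n$ with $\pi'$ a $132$-avoiding $(n-1)$-permutation. The crucial bookkeeping step is the identity
$$N_{t_2}(\pi)=N_{t_2}(\pi')+\rmax(\pi').$$
To establish it, I would split the occurrences of $t_2$ in $\pi$ into those that use $n$ and those that do not. Since $n$ is both the largest value and the last letter, an occurrence using $n$ must have the form $xn$; by Lemma \ref{q132lemma2} these are in bijection with the right-to-left maxima $x$ of $\pi'$, contributing $\rmax(\pi')$. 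For an occurrence not using $n$, the letter $n$ sits strictly to the right of and strictly above both of its elements, i.e. in the unshaded upper-right region of $t_2$; hence appending $n$ neither destroys nor creates such occurrences, so they are exactly the occurrences of $t_2$ in $\pi'$, contributing $N_{t_2}(\pi')$.

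Next, for $n\ge 4$ the permutation $\pi'$ has length at least $3$, so I would solve $N_{t_2}(\pi')+\rmax(\pi')=2$ subject to $\rmax(\pi')\ge 1$ and, by Theorem \ref{q132proposition1}, $N_{t_2}(\pi')\neq 1$. The only admissible solution is $\rmax(\pi')=2$ and $N_{t_2}(\pi')=0$. I would then note that the second condition is automatic given the first: if $\rmax(\pi')=2$ then the maximum of $\pi'$ is not its last letter, so writing $\pi'=\alpha M\beta$ with $M$ the maximum and $\beta$ nonempty, every ascending pair inside $\beta$ has $M$ to its upper-left, every ascending pair inside $\alpha$ has a letter of $\beta$ to its lower-right, and every pair $xM$ has a letter of $\beta$ to its lower-right; in each case a shaded cell of $t_2$ is occupied, so $N_{t_2}(\pi')=0$. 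Thus for $n\ge 4$ the permutations we want are exactly $\pi=\pi'n$ with $\pi'$ a $132$-avoiding $(n-1)$-permutation having precisely two right-to-left maxima.

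Finally I would count those $\pi'$. In $\pi'=\alpha M\beta$, the letters of $\alpha$ contribute no right-to-left maxima while $M$ contributes one, so $\rmax(\pi')=2$ forces $\beta$ nonempty with exactly one right-to-left maximum, i.e. $\beta$ ends in its own maximum. If $|\alpha|=a$ and $|\beta|=b$, then $\alpha$ is an arbitrary $132$-avoiding permutation ($C_a$ choices) and $\beta=\beta''\max\beta$ with $\beta''$ arbitrary $132$-avoiding ($C_{b-1}$ choices), while the relative values are forced by $132$-avoidance. Summing over $a+b=n-2$ with $b\ge 1$ yields
$$\sum_{a+b=n-2,\ b\ge 1}C_aC_{b-1}=\sum_{a+c=n-3}C_aC_c=C_{n-2}$$
by the defining recursion for the Catalan numbers. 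For $n=3$ the reduction changes because $\pi'$ has length $2$, where Theorem \ref{q132proposition1} no longer forbids a single occurrence: the case $(\rmax,N_{t_2})=(1,1)$ now contributes the extra permutation $123$ alongside the single $\rmax=2$ permutation $213$, giving exactly two, and a direct check shows lengths $1$ and $2$ give none. The main obstacle is pinning down the identity $N_{t_2}(\pi)=N_{t_2}(\pi')+\rmax(\pi')$ and the automatic vanishing $\rmax(\pi')=2\Rightarrow N_{t_2}(\pi')=0$ precisely from the shading of $t_2$, since it is exactly the breakdown of Theorem \ref{q132proposition1} at length $2$ that produces the exceptional value at $n=3$.
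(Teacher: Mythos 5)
Your proposal is correct and follows essentially the same route as the paper: Lemma \ref{q132lemma1} forces $\pi=\pi'n$, Lemma \ref{q132lemma2} converts the occurrences involving $n$ into right-to-left maxima of $\pi'$, Theorem \ref{q132proposition1} rules out the one-maximum case for $n\geq 4$, and the two-maximum case yields the same Catalan convolution $\sum_{i}C_iC_{n-3-i}=C_{n-2}$, with the same exceptional analysis at $n=3$. Your explicit identity $N_{t_2}(\pi)=N_{t_2}(\pi')+\rmax(\pi')$ and the observation that $\rmax(\pi')=2$ forces $N_{t_2}(\pi')=0$ (which is just Lemma \ref{q132lemma1} applied to $\pi'$) are only a cleaner packaging of the paper's case-by-case bookkeeping, not a different argument.
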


\begin{proof}  To contain occurrences of $t_2$, Lemma \ref{q132lemma1} forces the structure of $n$-permutations in question to be $\pi'n$. By Lemma \ref{q132lemma2}, $\pi'$ has either one or two right-to-left maxima.

If $\pi'$ has one right-to-left maximum, then $\pi=\pi''(n-1)n$. We see that $(n-1)n$ is an occurrence of $t_2$, and unless $\pi=123$, Theorem \ref{q132proposition1} verifies that it is not possible to have exactly one occurrence of $t_2$ in $\pi''$ which is equivalent to having exactly two occurrences of $t_2$ in $\pi$.

Suppose that $\pi'$ has two right-to-left maxima, $n-1$ and $x$.  Note that $x$ must be the rightmost element of $\pi'$. Clearly, $(n-1)n$ and $xn$ are both occurrences of $t_2$. Also there are no elements to the right of $n-1$ that are larger than $x$.  On the other hand since $\pi$ must avoid the pattern 132, no element to the left of $n-1$ is smaller than $x$.  Further, because of $x$, no element to the left of $(n-1)$ (all of which are larger than $x$) can be the bottom element of an occurrence of $t_2$, and such elements can form any $132$-avoiding permutation.  Similarly, no element to the right of $(n-1)$ that is smaller than $x$ can be the bottom element of an occurrence of $t_2$ because of $n-1$ and these elements can form any $132$-avoiding permutation. Thus, assuming $A_{2,n}$ denotes the number of $132$-avoiding $n$-permutations with exactly two occurrences of $t_2$, we have, for $n\geq 4$, the following recursion, where $i$ stands for the number of elements to the left of $(n-1)$:
$$A_{2,n}=\sum_{i=0}^{n-3}C_iC_{n-3-i}=C_{n-2}.$$
The case $n=3$ is $A_{2,3}=2$, which is given by the permutations 123 and 213. \end{proof}

\begin{theorem}\label{q132proposition4} For $n\geq 5$, there are $C_{n-2}$ $132$-avoiding $n$-permutations that contain exactly three occurrences of $t_2$. There are three such permutations of length $4$ and none of smaller lengths.\end{theorem}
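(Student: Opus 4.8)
The plan is to reduce the whole count to a single additive recursion for the number of occurrences of $t_2$, and then to the enumeration of $132$-avoiding permutations by their number of right-to-left maxima. Since we want exactly three occurrences, in particular at least one, Lemma \ref{q132lemma1} tells us that any such permutation $\pi$ ends with $n$, so I write $\pi=\pi'n$ with $\pi'$ a $132$-avoiding $(n-1)$-permutation. Letting $m$ denote the number of right-to-left maxima of $\pi'$, I would first record the key identity $N_{t_2}(\pi)=m+N_{t_2}(\pi')$. Indeed, by Lemma \ref{q132lemma2} the occurrences of $t_2$ that involve $n$ are exactly the pairs $xn$ with $x$ a right-to-left maximum of $\pi'$, giving $m$ of them; and every occurrence not involving $n$ is an occurrence of $t_2$ inside $\pi'$, because appending the overall maximum at the very last position places the new point strictly above and to the right of any earlier pair, hence into a cell that is not shaded in $t_2$, so $n$ neither destroys nor creates such an occurrence.

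Next I would run a case analysis on $m$, seeking $m+N_{t_2}(\pi')=3$. The case $m\geq 4$ is impossible, and the case $m=2$ would require $N_{t_2}(\pi')=1$, which is ruled out by Theorem \ref{q132proposition1} since $\pi'$ has length $n-1\geq 4$. If $m=3$ then I need $N_{t_2}(\pi')=0$, but this is automatic, since a $132$-avoiding permutation with at least two right-to-left maxima does not end in its maximum and hence avoids $t_2$ (this is the dichotomy behind Theorem \ref{q132proposition2}); thus the $m=3$ case consists of all $132$-avoiding $(n-1)$-permutations with exactly three right-to-left maxima. If $m=1$ then $\pi'=\rho(n-1)$ and I need $N_{t_2}(\pi')=2$; applying the same identity to $\rho(n-1)$ gives $N_{t_2}(\pi')=m(\rho)+N_{t_2}(\rho)=2$, and since $\rho$ has length $n-2\geq 3$, Theorem \ref{q132proposition1} again forbids $N_{t_2}(\rho)=1$, forcing $m(\rho)=2$ and $N_{t_2}(\rho)=0$. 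Hence the $m=1$ case consists of all $132$-avoiding $(n-2)$-permutations with exactly two right-to-left maxima.

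To finish, I would count permutations by their number of right-to-left maxima using results already in hand: by Lemma \ref{lemmaEasy1} a $132$-avoiding $\ell$-permutation has exactly $m$ right-to-left maxima precisely when it has $\ell-m$ occurrences of $s_1$, so by Theorem \ref{thm:Boxed12IsCatalanTriangle} their number is $C(\ell-1,\ell-m)$. The $m=3$ case therefore contributes $C(n-2,n-4)$ and the $m=1$ case contributes $C(n-3,n-4)=C_{n-3}$, the last equality being the coincidence of the two rightmost entries in each row of Catalan's triangle. Adding these and using the defining recursion $C(n-2,n-3)=C(n-3,n-3)+C(n-2,n-4)$ together with $C(n-2,n-3)=C_{n-2}$ and $C(n-3,n-3)=C_{n-3}$, I obtain $C(n-2,n-4)+C_{n-3}=C_{n-2}$, as claimed.

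Finally I would treat the small cases directly: for length at most $3$ the maximum number of occurrences of $t_2$ is $2$, so none has three, while for length $4$ one checks that exactly $1234$, $2134$ and $3214$ have three occurrences. The reason length $4$ is exceptional, and the reason the threshold is $n\geq 5$, is that in the $m=1$ case the auxiliary permutation $\rho$ then has length $2$, where $N_{t_2}(\rho)=1$ is possible (namely $\rho=12$), so the degenerate sub-case that Theorem \ref{q132proposition1} kills for $n\geq 5$ survives and produces one extra permutation beyond $C_{n-2}$. I expect the main obstacle to be exactly this nested reduction in the $m=1$ case: one must apply the recursion a second time and verify that Theorem \ref{q132proposition1} eliminates the degenerate possibility at both levels, which is precisely what pins down the hypothesis $n\geq 5$.
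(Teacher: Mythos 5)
Your proposal is correct, and it shares the paper's skeleton while finishing the count by a genuinely different route. Both arguments start identically: Lemma \ref{q132lemma1} forces $\pi=\pi'n$, Lemma \ref{q132lemma2} identifies the occurrences involving $n$ with the right-to-left maxima of $\pi'$, and one splits into the cases where $\pi'$ has $1$, $2$ or $3$ right-to-left maxima. Your additivity identity $N_{t_2}(\pi'n)=\rmax(\pi')+N_{t_2}(\pi')$ is exactly what the paper uses implicitly (it is sound: the paper's own proofs of Lemma \ref{q132lemma1} and Theorem \ref{q132proposition1} construct occurrences not involving $n$ in permutations ending with $n$, so the top-right cell of $t_2$ is indeed unshaded), and your unfolding of the $m=1$ case via a second application of the identity, killed by Theorem \ref{q132proposition1}, is in substance the paper's citation of Theorem \ref{q132proposition3}. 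Where you genuinely diverge is the enumeration of the surviving cases. The paper writes the $m=3$ permutations explicitly as $\pi=\pi'_1(n-1)\pi'_2x\pi'_3yn$ with $132$-avoiding blocks and evaluates a Catalan convolution,
$$A_{3,n}=C_{n-3}+\sum_{i=0}^{n-4}C_i\sum_{j=0}^{n-4-i}C_jC_{n-4-i-j}=\sum_{i=0}^{n-3}C_iC_{n-3-i}=C_{n-2},$$
whereas you reduce both cases to counting $132$-avoiding permutations by their number of right-to-left maxima, then invoke Lemma \ref{lemmaEasy1} together with Theorem \ref{thm:Boxed12IsCatalanTriangle} to get the ballot numbers $C(n-2,n-4)$ and $C(n-3,n-4)=C_{n-3}$, and close with the defining recursion of Catalan's triangle. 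Your route buys a tighter integration with the earlier $s_1$ machinery (the final $C_{n-2}$ drops out of a one-line triangle identity rather than a convolution computation), and your diagnosis of the $n=4$ anomaly --- the degenerate sub-case $(\rmax(\rho),N_{t_2}(\rho))=(1,1)$ with $\rho=12$, which Theorem \ref{q132proposition1} can only exclude when $\rho$ has length at least $3$ --- explains the exceptional count of three permutations, which the paper merely lists as $1234$, $2134$, $3214$. The paper's route, in exchange, stays self-contained within the $t_2$ subsection and does not rely on the $s_1$ distribution results.
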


\begin{proof} Let $n\geq 4$ (clearly for smaller lengths we have no ``good" permutations). By Lemma \ref{q132lemma1}, to contain occurrences of $t_2$, the structure of an $n$-permutation $\pi$ in question must be  $\pi'n$. By Lemma \ref{q132lemma2}, $\pi'$ has either one or two or three right-to-left maxima.

If $\pi'$ has one right-to-left maximum, then either $\pi$ ends with $(n-2)(n-1)n$ or it ends with $(n-1)n$ and there are two right-to-left maxima in the permutation obtained from $\pi$ by removing $(n-1)n$. In the former case, unless $\pi=1234$, by Theorem \ref{q132proposition1}, we have no ``good" permutations, while in the later case we can apply Theorem \ref{q132proposition3}  to obtain $C_{n-3}$ permutations containing exactly three occurrence of $t_2$ (note, that $(n-1)n$ is an occurrence of $t_2$).

If $\pi'$ has two right-to-left maxima, then they, together with $n$, will form two occurrences of $t_2$, and following the arguments in Theorem \ref{q132proposition3} we will see that there are no other occurrences of $t_2$. Thus, there are no ``good" permutations in this case.

Finally, if $\pi'$ has three right-to-left maxima, say $(n-1)>x>y$, then we can argue in a similar way as in the proof of Theorem \ref{q132proposition3} to see that $\pi=\pi'_1(n-1)\pi'_2x\pi'_3yn$ where each element of $\pi'_1$, if any, is larger than any element in $\pi'_2$ and $\pi'_3$, and each element of $\pi'_2$, if any, is larger than any element in $\pi'_3$. Moreover, $\pi'_1$ (resp., $\pi'_2$ and $\pi'_3$) is any $132$-avoiding permutation not contributing to extra occurrences of $t_2$. Thus, if $n\geq 5$ and $A_{3,n}$ is  the number of $132$-avoiding $n$-permutations with exactly three occurrences of $t_2$,  we have the following recursion (here $i$ is the number of elements to the left of $(n-1)$ and $j$ is the number of elements between $(n-1)$ and $x$):
$$A_{3,n}=C_{n-3}+\sum_{i=0}^{n-4}C_i\sum_{j=0}^{n-4-i}C_jC_{n-4-i-j}=C_{n-3}+\sum_{i=0}^{n-4}C_iC_{n-3-i}=\sum_{i=0}^{n-3}C_iC_{n-3-i}=C_{n-2}.$$
The case $n=4$ is $A_{3,4}=3$, which is given by the permutations 1234, 2134 and 3214.
\end{proof}

\begin{remark}\label{remarkLast} Note that by Theorems {\em \ref{q132proposition3}} and {\em \ref{q132proposition4}}, for $n\geq 4$, the numbers of $132$-avoiding $n$-permutations containing exactly two and exactly three occurrences of $t_2$ coincide. A natural question here is to provide a combinatorial proof of this fact.\end{remark}

\section{Concluding remarks}\label{concluding}

Studying the distribution of occurrences of a given pattern on various sets of permutations is typically a very hard problem, but in this paper we were able to solve this problem explicitly for four patterns, namely $p$, $s_1$, $s_2$ and $s_3$, providing links to two well-known objects -- the harmonic numbers and Catalan's triangle. A natural research direction here is to continue this study for other patterns.  If determining the complete distribution seems difficult, one could attempt to provide formulas for particular cases like we have done for the patterns $t_1$ and $t_2$ on 132-avoiding permutations.  Also, the patterns we have studied on 132-avoiding permutations ($s_1$, $s_2$, $s_3$, $t_1$ and $t_2$) can be studied over all permutations or other sets of restricted permutations. Likely, such studies will bring bijective questions, like the one mentioned in Remark \ref{remarkLast}.

\section{Acknowledgments}

We are grateful to Sofia Archuleta for useful discussions, in particular helping discover and verify the bijection presented in Section \ref{sec-q1-q2}.

\end{document}